\date{}
\title{\vspace{-0.9cm}Most probably intersecting hypergraphs}
\author{
Shagnik Das \thanks{Department of Mathematics, UCLA, Los Angeles, CA, 90095. Email: shagnik@ucla.edu.}
\and
Benny Sudakov \thanks{Department of Mathematics, ETH, 8092 Zurich, Switzerland, and Department of Mathematics, UCLA, Los Angeles, CA 90095, USA. Email: benjamin.sudakov@math.ethz.ch. Research supported in part by SNSF grant 200021-149111 and by a USA-Israel BSF grant.}
}
\theoremstyle{plain}
\newtheorem{THM}{Theorem}[section]
\newtheorem{PROP}[THM]{Proposition}
\newtheorem{LEMMA}[THM]{Lemma}
\newtheorem{COR}[THM]{Corollary}
\theoremstyle{definition}
\newcommand{\prob}{\mathbb{P}}
\newcommand{\card}[1]{\left| #1 \right|}
\newcommand{\cC}{\mathcal{C}}
\newcommand{\cF}{\mathcal{F}}
\newcommand{\cG}{\mathcal{G}}
\newcommand{\cH}{\mathcal{H}}
\newcommand{\cL}{\mathcal{L}}
\newcommand{\cM}{\mathcal{M}}
\newcommand{\inter}{\mathrm{int}}
\begin{document}
\maketitle

\begin{abstract}
The celebrated Erd\H{o}s-Ko-Rado theorem shows that for $n \ge 2k$ the largest intersecting $k$-uniform set family on $[n]$ has size $\binom{n-1}{k-1}$.  It is natural to ask how far from intersecting larger set families must be.  Katona, Katona and Katona introduced the notion of most probably intersecting families, which maximise the probability of random subfamilies being intersecting.

We study the most probably intersecting problem for $k$-uniform set families.  We provide a rough structural characterisation of the most probably intersecting families and, for families of particular sizes, show that the initial segment of the lexicographic order is optimal.
\end{abstract}

\section{Introduction} \label{sec:intro}

A family of sets $\cF$ is said to be \emph{intersecting} if $F_1 \cap F_2 \neq \emptyset$ for all $F_1, F_2 \in \cF$.  A central result in extremal set theory is the Erd\H{o}s-Ko-Rado theorem, which determines the largest size of an intersecting $k$-uniform family over $[n]$.  Given this extremal result, one may then investigate the appearance of disjoint pairs in larger families of sets.

Recently Katona, Katona and Katona introduced a probabilistic version of this supersaturation problem.  Given a set family $\cF$, let $\cF_p$ denote the random subfamily obtained by keeping each set independently with probability $p$.  They asked, for a given $p$, $n$ and $m$, which set families on $[n]$ with $m$ sets maximise the probability of $\cF_p$ forming an intersecting family.  We study this problem for $k$-uniform set families.  In the case $k=2$, we determine the optimal graphs when they are not too dense.  In the hypergraph setting, we provide an approximate structural result, and are able to determine the extremal hypergraphs exactly for some ranges of values of $m$.  These mark the first general results for the probabilistic supersaturation problem for $k$-uniform set families.

We now discuss the history of the supersaturation problem for intersecting families, before introducing the probabilistic version of Katona, Katona and Katona and presenting our new results.

\subsection{Supersaturation for intersecting families} \label{subsec:supersat}

The typical extremal problem asks how large a structure can be without containing some forbidden configuration.  A classic example is that of the Erd\H{o}s-Ko-Rado Theorem \cite{ekr61}, which shows that when $n \ge 2k$ the largest $k$-uniform intersecting family of sets on $[n]$ has size $\binom{n-1}{k-1}$.  This theorem has been central to extremal set theory, inspiring many extensions and applications over the years; see Anderson's book \cite{and87} for a brief survey.

One may rephrase the extremal question as asserting that larger structures must contain a forbidden configuration. This naturally leads to what is known as a supersaturation problem: how many such configurations appear in larger structures?  In the context of intersecting families, this amounts to asking how many disjoint pairs of sets must appear in large families.  In the non-uniform setting, this was first studied by Frankl \cite{frankl77} and, independently, Ahlswede \cite{ahlswede80}, who showed that the number of disjoint pairs is minimised by taking sets as large as possible.

\begin{THM}[Frankl \cite{frankl77}, 1977; Ahlswede \cite{ahlswede80}, 1980] \label{thm:franklahlswede}
If $\sum_{i=k+1}^n \binom{n}{i} \le m \le \sum_{i=k}^n \binom{n}{i}$, then the minimum number of disjoint pairs of sets in a family of $m$ subsets of $[n]$ is attained by some family $\cF$ with $\cup_{i > k} \binom{[n]}{i} \subset \cF \subset \cup_{i \ge k} \binom{[n]}{i}$.
\end{THM}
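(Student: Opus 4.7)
My approach would be a two-stage compression argument, in which each operation is shown not to increase the number of disjoint pairs.

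\smallskip

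\emph{Stage 1 (up-compression).} Whenever $A \in \cF$ and $A \cup \{x\} \notin \cF$ for some $x \in [n] \setminus A$, replace $A$ by $A \cup \{x\}$. Any $H \in \cF \setminus \{A\}$ disjoint from $A \cup \{x\}$ is automatically disjoint from $A$, so this substitution can only destroy disjoint pairs, never create them. The quantity $\sum_{F \in \cF} |F|$ strictly increases at each step and is bounded above by $n|\cF|$, so after finitely many moves I arrive at an upward-closed family $\cF'$ of the same cardinality whose disjoint-pair count is at most that of $\cF$.

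\smallskip

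\emph{Stage 2 (exchanging small for large).} Suppose $\cF'$ still contains some $F$ with $|F| < k$. By upward closure $\cF'$ contains every one of the $2^{n-|F|}$ supersets of $F$; combined with $|\cF'| \le \sum_{i \ge k} \binom{n}{i}$ this forces at least one $k$-set $G$ to be missing from $\cF'$ (otherwise $\cF'$ would already contain $\bigcup_{i \ge k} \binom{[n]}{i}$ together with the extra small set $F$, violating the upper bound on $m$). Choose $G$ to be \emph{top-missing}, meaning every strict superset of $G$ lies in $\cF'$; then the exchange $\cF' \mapsto (\cF' \setminus \{F\}) \cup \{G\}$ preserves upward closure. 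The critical step is to verify the swap inequality
\[
\bigl|\cF' \cap 2^{[n] \setminus F}\bigr| \;\ge\; \bigl|\cF' \cap 2^{[n] \setminus G}\bigr| \;-\; [F \cap G = \emptyset],
\]
which amounts to saying that the swap does not increase disjoint pairs. Iterating Stages 1 and 2 eventually produces an upward-closed family contained in $\bigcup_{i \ge k} \binom{[n]}{i}$. A symmetric swap, replacing a $k$-set of $\cF'$ by any missing set of size $>k$ chosen to be bottom-present, then forces $\bigcup_{i > k}\binom{[n]}{i} \subset \cF'$, giving the claimed form.

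\smallskip

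\emph{Main obstacle.} The technical heart of the plan is the swap inequality above. The intuitive content is clear—$F$ being smaller than $G$ has strictly more disjoint partners in $2^{[n]}$, namely $2^{n-|F|}$ versus $2^{n-|G|}$—but to transport this to a statement inside $\cF'$ one must genuinely use upward closure, presumably by constructing an explicit injection from $\cF' \cap 2^{[n]\setminus G}$ into $\cF' \cap 2^{[n]\setminus F}$. Arranging this injection so that it simultaneously exploits $F \in \cF'$ and the top-missingness of $G$, and handling the correction term when $F$ and $G$ happen to be disjoint, is where I expect most of the work to lie; everything else is bookkeeping.
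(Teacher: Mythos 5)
The paper does not prove this theorem; it is imported verbatim from Frankl and Ahlswede, so there is no in-text proof to compare against. I will therefore assess the proposal on its own terms.

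Stage 1 is correct and standard: replacing $A$ by a superset $A\cup\{x\}$ can only delete disjoint partners, and the potential $\sum_{F\in\cF}|F|$ guarantees termination at an upward-closed family of the same size. (Two small omissions: for the argument you should take $F$ to be a \emph{minimal} element of $\cF'$, as otherwise $\cF'\setminus\{F\}$ need not be upward-closed; and you must observe $\emptyset\notin\cF'$, since $\emptyset\in\cF'$ and upward-closure would force $\cF'=2^{[n]}$, contradicting the upper bound on $m$.)

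The real problem is Stage 2. The swap inequality is not merely the technical heart of the proof left to be filled in --- as stated, it is false, even with $F$ minimal in $\cF'$ and $G$ top-missing. Take $n=7$, $k=3$ and let $\cF'$ be the upward closure of the nine sets $\{1,2\},\{1,6\},\{1,7\},\{2,6\},\{2,7\},\{3,4,5,6\},\{3,4,5,7\},\{1,3,4,5\},\{2,3,4,5\}$. One checks $|\cF'|=85$, which lies in the required range $64\le m\le 99$; that $F=\{1,2\}$ is a minimal element of $\cF'$; and that $G=\{3,4,5\}$ is missing and top-missing, since $G\cup\{x\}\in\cF'$ for every $x\in\{1,2,6,7\}$. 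Here $\cF'\cap 2^{[n]\setminus F}$ consists of the three sets $\{3,4,5,6\},\{3,4,5,7\},\{3,4,5,6,7\}$, while $\cF'\cap 2^{[n]\setminus G}$ consists of the ten subsets of $\{1,2,6,7\}$ of size at least two other than $\{6,7\}$. Since $F\cap G=\emptyset$, the proposed inequality reads $3\ge 10-1$, which fails; performing the swap actually creates six new disjoint pairs. In particular the injection you hope to build from $\cF'\cap 2^{[n]\setminus G}$ into $\cF'\cap 2^{[n]\setminus F}$ cannot exist. (In the same family the swap with $F=\{1,6\}$ instead of $F=\{1,2\}$ \emph{does} go through, so the route is perhaps not hopeless, but then the burden is to show a good pair $(F,G)$ always exists for a minimizer --- a genuinely different and more global claim than the local ``small-for-large'' exchange you describe, and not at all bookkeeping.) You would need a substantially new idea here; the actual arguments of Frankl and of Ahlswede do not proceed by this one-set-at-a-time exchange.
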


Note that this theorem describes the approximate structure of the extremal families, but does not specify which sets of size $k$ should be chosen.  Since every set of size $k$ is disjoint from the same number of sets in $\cup_{i > k} \binom{[n]}{i}$, in light of the above result, minimising the total number of disjoint pairs is equivalent to minimising the number of disjoint pairs amongst sets of size $k$, a problem that was first explicitly posed by Ahlswede \cite{ahlswede80} in 1980.  In the $k = 2$ case, this translates to determining which graphs have the minimum number of disjoint pairs of edges, and had in fact been resolved earlier by Ahlswede and Katona \cite{ahlkat78}.

The extremal graphs are best described by the lexicographic order.  Under this order, we say $A < B$ if $\min (A \triangle B) \in A$; that is, sets with smaller elements are preferred.  Denote by $\cL_{n,k}(m)$ the first $m$ sets in $\binom{[n]}{k}$ under the lexicographic order.  The complement of $\cL_{n,k}(m)$ is isomorphic to the corresponding initial segment of the colexicographic order, where $A < B$ if $\max (A \triangle B) \in B$.  Let $\cC_{n,k}(m)$ denote the initial segment of the colexicographic order.

\begin{THM}[Ahlswede-Katona \cite{ahlkat78}, 1978] \label{thm:ahlswedekatona}
Over all graphs on $n$ vertices with $m$ edges, either $\cL_{n,2}(m)$ or $\cC_{n,2}(m)$ minimises the number of disjoint pairs of edges.  Moreover, if $m < \frac12 \binom{n}{2} - \frac{n}{2}$, then $\cL_{n,2}(m)$ is optimal, while if $m > \frac12 \binom{n}{2} + \frac{n}{2}$, then $\cC_{n,2}(m)$ is optimal.
\end{THM}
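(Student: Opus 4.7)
The problem is to minimise the number of disjoint edge pairs in a graph with $m$ edges on $n$ vertices, which equals $\binom{m}{2} - T(G)$ where $T(G) = \sum_v \binom{d_v}{2}$ counts pairs of edges sharing a vertex. So the task reduces to maximising $T(G)$.

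The first step is a shifting argument. For $i<j$ with $d_i \ge d_j$, define $\sigma_{ij}$ which, for every edge $\{j,k\} \in E(G)$ with $k \ne i$ and $\{i,k\} \notin E(G)$, replaces $\{j,k\}$ by $\{i,k\}$. If $s$ such replacements occur, then $d_i \mapsto d_i + s$ and $d_j \mapsto d_j - s$ with all other degrees unchanged, and a direct expansion gives
\[
\Delta T \;=\; s\bigl(d_i - d_j + s\bigr) \;\ge\; 0.
\]
Iterating such shifts and relabelling so that $d_1 \ge d_2 \ge \ldots \ge d_n$ throughout, I may assume $N(j) \subseteq N(i) \cup \{i\}$ for all $i<j$; these are exactly the \emph{threshold graphs}, each built by adding vertices one at a time either as isolated or as dominating the current graph. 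Encoding such a graph by a binary sequence $b_1 b_2 \ldots b_n$ recording the addition type gives $m = \sum_{v\,:\,b_v=1}(v-1)$, so threshold graphs on $n$ vertices with $m$ edges biject with subsets $S \subseteq \{1,\ldots,n-1\}$ summing to $m$. Both $\cL_{n,2}(m)$ and $\cC_{n,2}(m)$ are threshold, with lex corresponding to the ``few large parts'' extreme (parts $n-1, n-2, \ldots$ with a small remainder) and colex to the ``staircase'' extreme (parts $1, 2, \ldots, k$ with a large remainder).

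The crucial step is to show that among threshold graphs with $m$ edges, $T$ is maximised at $\cL_{n,2}(m)$ or $\cC_{n,2}(m)$. Using the closed form for $T$ in terms of $S$ (obtained by reading off the degree sequence from the block structure of $b$), one analyses the effect of an edge-preserving local exchange that replaces a pair $\{a,b\} \subseteq S$ by a pair $\{a',b'\}$ with $a+b = a'+b'$, chosen so as to push $S$ towards either the ``big parts'' or the ``small parts'' extreme. A case analysis shows that for any non-extreme $S$ at least one direction weakly increases $T$, and iterating converges to lex or to colex. This verification of monotonicity under the local move is the principal technical obstacle.

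Finally, for the ``moreover'' part I would evaluate $T$ on the two candidates directly. Writing $m = \binom{k}{2} + r$ with $0 \le r < k$ yields a cubic-in-$k$ expression for $T(\cC_{n,2}(m))$, and writing $m = (n-1) + (n-2) + \ldots + (n-\ell) + s$ yields the analogous expression for $T(\cL_{n,2}(m))$. Their difference, viewed as a function of $m$, changes sign near $m = \tfrac12\binom{n}{2}$; a short computation pins the sign down uniformly outside the band $\tfrac12\binom{n}{2} \pm \tfrac{n}{2}$, giving the claimed optimality of $\cL_{n,2}(m)$ for small $m$ and $\cC_{n,2}(m)$ for large $m$.
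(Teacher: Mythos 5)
This statement is cited from Ahlswede--Katona \cite{ahlkat78}; the paper does not prove it, so there is no internal proof to compare against. Evaluating your proposal on its own merits: the reduction to maximising $T(G)=\sum_v\binom{d_v}{2}$, the compression computation $\Delta T = s(d_i-d_j+s)\ge 0$, and the resulting restriction to threshold graphs (encoded via creation sequences as subsets $S\subseteq\{1,\dots,n-1\}$ with $\sum S = m$) are all correct and are indeed the standard opening moves for this theorem. However, you explicitly leave the load-bearing part unproven: the claim that among threshold graphs the maximum of $T$ is attained at one of the two extremes (lex or colex) is asserted via an unspecified ``edge-preserving local exchange'' with ``a case analysis,'' which you yourself flag as ``the principal technical obstacle.'' This is precisely where the Ahlswede--Katona argument requires real work --- it is not true that every local exchange preserving $\sum S$ monotonically pushes $T$ toward one of the extremes, and one must carefully track how a swap in the creation sequence reshuffles the entire degree sequence, not just two degrees. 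Likewise, the ``moreover'' cutoff at $\tfrac12\binom{n}{2}\pm\tfrac{n}{2}$ is described only as ``a short computation.'' As written, the proposal is a correct reduction and a correct high-level plan, but the core comparison among threshold graphs and the final quantitative threshold are both missing, so this is an outline rather than a proof.
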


In 2003, Bollob\'as and Leader \cite{bollead03} offered a conjecture for the solution to the problem for general $k$, which in particular implies that $\cL_{n,k}(m)$ is optimal for small families.  By taking complements, this further implies that $\cC_{n,k}(m)$ is optimal for large families.  In an earlier paper with Gan \cite{dasgansud13}, we were able to verify their conjecture for small $m$.

\subsection{Probabilistic supersaturation}

In 2012, Katona, Katona and Katona \cite{katkatkat12} introduced a probabilistic measure of supersaturation for large families.  Rather than minimising the total number of disjoint pairs in large families, they sought to maximise the probability of a random subfamily being intersecting.  More formally, given a (not necessarily uniform) family $\cF$ of sets, and some $p \in [0,1]$, let $\cF_p$ denote the random subfamily of $\cF$, where each set is retained independently with probability $p$.  For a given $0 \le m \le 2^n$, they asked for the families $\cF$ of $m$ subsets of $[n]$ maximising $\prob(\cF_p \textrm{ is intersecting})$.

Clearly, if $\cF$ is intersecting, then $\cF_p$ must also be intersecting, and hence one should take an intersecting family if possible.  Thus, as in the case of the counting supersaturation problem, one is interested in families larger than the extremal bound.

\medskip

We observe here that the probabilistic problem is in fact stronger than the counting version described before.  Indeed, by conditioning on the number of sets in $\cF_p$, we have
\begin{equation} \label{eqn:probtocount}
  \prob( \cF_p \textrm{ is intersecting}) = \sum_{t=0}^m \prob\left( \cF_p \textrm{ is intersecting} \mid \card{\cF_p} = t \right) \prob \left( \card{\cF_p} = t \right) = \sum_{t = 0}^m \inter(\cF,t) p^t (1-p)^{m-t},
\end{equation}
where $\inter(\cF,t)$ denotes the number of intersecting subfamilies of $\cF$ of size $t$.  In particular, it follows that $\inter(\cF,t) < 2^m$ for all $t$.  If we take $p = o(2^{-m})$, then $2^mp^3 = o(p^2)$, and so expanding the first few terms of the sum on the right-hand side gives
\[ \prob( \cF_p \textrm{ is intersecting}) = (1-p)^m + mp(1-p)^{m-1} + \inter(\cF,2)p^2(1-p)^{m-2} + o(p^2). \]
This quantity is maximised if and only if the number of intersecting pairs of sets in $\cF$ is maximised, and thus the number of disjoint pairs must be minimised.  Hence a solution to the probabilistic problem for all values of $p$ provides a solution to the counting problem as well.

\medskip

Katona, Katona and Katona \cite{katkatkat12} determined the extremal families for $m \le 2^{n-1} + \binom{n-1}{\left \lceil (n-3)/2 \right \rceil}$.  In particular, they showed that for all $0 \le p \le 1$, it is optimal to take all sets of size larger than $\frac{n}{2}$, with the remaining sets of size $\left \lfloor \frac{n}{2} \right \rfloor$ chosen to minimise the number of disjoint pairs.  They further conjectured the existence of a nested sequence $\cF_0 \subset \cF_1 \subset \hdots \subset \cF_{2^n}$ of families such that $\cF_m$ is the most probably intersecting family of size $m$.

In the same year, Russell \cite{russell12} provided some evidence towards this conjecture, by proving a result similar to Theorem \ref{thm:franklahlswede}, showing that there is a most probably intersecting family consisting of sets that are as large as possible.  However, in a later paper with Walters \cite{russwalt13}, they used the non-nestedness of the extremal graphs in Theorem \ref{thm:ahlswedekatona} to show that the most probably intersecting families are not nested for $\sum_{i=3}^n \binom{n}{i} \le m \le \sum_{i=2}^n \binom{n}{i}$.

\medskip

While the above results hold for non-uniform families, much less was known in the uniform setting.  By the Erd\H{o}s-Ko-Rado theorem \cite{ekr61}, when $n \ge 2k$, the largest $k$-uniform intersecting family has size $\binom{n-1}{k-1}$, a bound attained when we take all sets containing some fixed element.  We call such a structure a \emph{star}; note that for $m \le \binom{n-1}{k-1}$, $\cL_{n,k}(m)$ is a star consisting of $m$ sets containing $1$.

Hence it follows that for $m \le \binom{n-1}{k-1}$, $\cL_{n,k}(m)$ is an intersecting family, and thus a most probably intersecting family.  Once we have $m > \binom{n-1}{k-1}$, we can no longer take an intersecting family.  Katona, Katona and Katona showed in \cite{katkatkat12} that for $m = \binom{n-1}{k-1} + 1$, it is optimal to add any set to a full star, and thus $\cL_{n,k}(m)$ is again optimal.  By applying $i,j$-compressions, Russell and Walters \cite{russwalt13} were able to show that for any $m$, there is a left-compressed most probably intersecting family, but were unable to show which compressed family is optimal. 

\subsection{Our results}

We apply the shifting arguments developed in \cite{dasgansud13} to this probabilistic supersaturation for $k$-uniform set families.  In the case $k = 2$, we show that the lexicographic order provides the most probably intersecting graphs for all sizes up to $c \binom{n}{2}$, with $c$ approximately $\frac{1}{17}$.

\begin{THM} \label{thm:graphs}
For $n, \ell$ and $m$ satisfying $n \ge 32 \ell$ and $0 \le m \le \binom{n}{2} - \binom{n-\ell}{2}$, the lexicographic graph $\cL_{n,2}(m)$ is the most probably intersecting graph on $[n]$ with $m$ edges.
\end{THM}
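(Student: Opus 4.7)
The plan is to follow the shifting/swap framework of \cite{dasgansud13}. First I would prove a shifting lemma: for every $i<j$ and every $t$, $\inter(S_{ij}(G),t) \ge \inter(G,t)$, and so by \eqref{eqn:probtocount} $\prob((S_{ij}G)_p \text{ is intersecting}) \ge \prob(G_p \text{ is intersecting})$. The pointwise inequality follows from an explicit injection on intersecting $t$-subfamilies obtained by shifting each edge $\{x,j\}$ to $\{x,i\}$ with a standard bijective fix on the pairs where the shift is not injective. Iterating, we may assume $G$ is left-compressed, which in particular forces $d_G(1) \ge \cdots \ge d_G(n)$.

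Second, I would rewrite the objective using the fact that an edge subset of $G$ is intersecting precisely when it lies in a star or is a triangle. Collapsing the star contributions via $\sum_t \binom{d(v)}{t} p^t (1-p)^{m-t} = (1-p)^{m-d(v)}$ produces the identity
\[
\prob(G_p \text{ is intersecting}) = \sum_{v \in [n]} (1-p)^{m-d(v)} + T(G)\, p^3 (1-p)^{m-3} - (n-1)(1-p)^m - mp(1-p)^{m-1},
\]
where $T(G)$ is the number of triangles in $G$. Since the last two terms depend only on $n,m,p$, the theorem reduces to showing that $F(G) := \sum_v (1-p)^{m-d(v)} + T(G)\,p^3 (1-p)^{m-3}$ is maximised at $G = \cL_{n,2}(m)$ among left-compressed graphs on $[n]$ with $m$ edges.

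Third, I would compare $G$ with $\cL_{n,2}(m)$ via a swap argument. Let $e$ be the lex-largest edge of $G \setminus \cL_{n,2}(m)$ and $e'$ the lex-smallest edge of $\cL_{n,2}(m) \setminus G$, so $e' < e$ in lex. The swap deleting $e$ and inserting $e'$ decomposes into two half-moves, each sliding one endpoint to a smaller index; by left-compressedness and convexity of $x \mapsto (1-p)^{m-x}$, at least one half-move produces a convex-favourable contribution of order $p(1-p)^{m-d_G(1)}$, while the other is at worst convex-unfavourable by $O(p^2 (1-p)^{m-d_G(n)})$. Re-compressing after each swap and iterating eventually produces $\cL_{n,2}(m)$.

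The main obstacle is controlling the triangle term $T(G)\,p^3(1-p)^{m-3}$, whose change under a single swap can be of order $\ell$ in either direction. The hypothesis $m \le \binom{n}{2} - \binom{n-\ell}{2}$ forces every left-compressed $G$ in question to have only $O(\ell)$ vertices of high degree, which bounds the triangle fluctuation by $O(\ell)$. The favourable swap gain carries an extra factor of roughly $(1-p)^{-(n-O(\ell))}/p^2$ relative to this correction, and the assumption $n \ge 32\ell$ is precisely what is needed to make the ratio at least $1$ uniformly in $p \in [0,1]$. Hence each swap strictly increases $F$ and the procedure terminates at $\cL_{n,2}(m)$.
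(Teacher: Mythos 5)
Your first two steps are sound: reducing to a left-compressed $G$ via $i,j$-compressions is exactly what Russell and Walters established, and the identity obtained by summing $\sum_{t}\binom{d(v)}{t}p^t(1-p)^{m-t}=(1-p)^{m-d(v)}$ over $v$ and isolating the triangle term is correct, giving
\[
\prob(G_p\text{ is intersecting}) = \sum_{v}(1-p)^{m-d(v)} + T(G)\,p^3(1-p)^{m-3} - (n-1)(1-p)^m - mp(1-p)^{m-1}.
\]
This is a clean reformulation that the paper does not use; the paper instead proves the stronger statement that $\cL_{n,2}(m)$ maximises $\inter(G,t)$ for every $t$ separately (Propositions~\ref{prop:graphcount} and~\ref{prop:trivint}) and then sums. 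The genuine gap is in your third step: the proposed swap can strictly decrease $F$, so your iteration is not monotone. Take $\ell=1$, $n=32$, $m=31$ and the left-compressed graph $G=\binom{[8]}{2}\cup\{\{1,9\},\{2,9\},\{3,9\}\}$. Then $e=\{7,8\}$ is the lex-largest edge of $G\setminus\cL_{n,2}(m)$ and $e'=\{1,10\}$ the lex-smallest of $\cL_{n,2}(m)\setminus G$, and the change in $\sum_v(1-p)^{m-d(v)}$ under the swap is
\[
p\bigl[(1-p)^{22}+(1-p)^{30}-2(1-p)^{24}\bigr] \;=\; p(1-p)^{22}\bigl[1+(1-p)^8-2(1-p)^2\bigr],
\]
which is strictly negative for all $p$ roughly in $(0,0.26)$. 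Moreover removing $\{7,8\}$ destroys six triangles while adding $\{1,10\}$ creates none, so the triangle term is negative as well. Thus $F$ strictly decreases under this swap, even though $F(G)\le F(\cL_{n,2}(m))$ does hold for this $G$; the monotone swap path you require does not exist.

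The underlying problem is that in a left-compressed $G$ the lex-largest edge of $G\setminus\cL_{n,2}(m)$ can join two vertices whose degrees are comparable to $d_G(1)$ (this always happens when $G$ is clique-like), in which case the two negative increments in the star sum together outweigh the one favourable increment at vertex $1$; your claimed per-swap gain of order $p(1-p)^{m-d_G(1)}$ is simply not there. Your triangle estimate is also off: a clique on $s\sim\sqrt{2\ell n}$ vertices is left-compressed with about $\ell n$ edges, and removing a single edge from it destroys $s-2 = \Theta(\sqrt{\ell n})$ triangles, not $O(\ell)$. The paper's proof circumvents per-swap monotonicity entirely. It argues by induction on $m+t$: if an optimal $G$ contains a full star, strip it and recurse; otherwise shifting an arbitrary edge towards a fixed vertex shows that an optimal $G$ has a vertex of degree $\Omega(n)$, then a small cover of high-degree vertices, then no isolated vertices, and a final shift of edges to an isolated vertex yields a contradiction. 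That structural route never compares $G$ to $\cL_{n,2}(m)$ edge by edge and so never encounters the non-monotonicity that breaks your step three.
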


When $k \ge 3$, the situation is rather more intricate.  The following theorem gives a rough structural description of hypergraphs maximising the number of $t$-intersecting subhypergraphs.  We say a star in $\cF$ is $\emph{full}$ if it contains $\binom{n-1}{k-1}$ sets, and \emph{almost full} if it has $(1 - o(1)) \binom{n-1}{k-1}$ sets.

\begin{THM} \label{thm:structure}
Let $k, \ell$ and $t \ge 2$ be integers, and suppose $n \ge n_0(k, \ell)$ and $\binom{n}{k} - \binom{n-\ell}{k} \le m \le \binom{n}{k} - \binom{n - \ell - 1}{k}$.  If $\cF$ is a $k$-uniform set family on $[n]$ of size $m$ maximising the number of intersecting subfamilies of size $t$, then either
\begin{itemize}
	\item[(i)] $\cF$ contains $\ell$ full stars, or
	\item[(ii)] $\cF$ consists of $\ell + 1$ almost-full stars.
\end{itemize}
\end{THM}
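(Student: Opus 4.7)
My strategy combines compression with a structural dichotomy. First, since left-shifting preserves pairwise intersections, it does not decrease the number $\inter(\cF, t)$ of intersecting $t$-subfamilies; this is the natural generalisation of the Russell--Walters observation for $t = 2$. I may therefore assume throughout that $\cF$ is left-compressed, so that the degree sequence $d_1 \ge d_2 \ge \dots \ge d_n$, with $d_v := |\{F \in \cF : v \in F\}|$, is monotone decreasing.

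Next, I would quantify how $\inter(\cF, t)$ depends on the top of this degree sequence. Setting $D := \binom{n-1}{k-1}$, each star $S_v := \{F \in \cF : v \in F\}$ automatically yields $\binom{d_v}{t}$ intersecting $t$-subfamilies, and subfamilies sharing a common element are enumerated by inclusion-exclusion over vertex sets $T$ with codegrees $d_T \le \binom{n-|T|}{k-|T|}$. Intersecting $t$-subfamilies without a common element are governed by the Hilton--Milner bound $\binom{n-1}{k-1} - \binom{n-k-1}{k-1} + 1 = O(n^{k-2})$ on the size of any intersecting family with empty total intersection, which for $n$ large keeps their contribution well under control compared to $\binom{D}{t}$. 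The upshot is that $\inter(\cF, t)$ is, to leading order and up to a carefully calibrated error, a convex function of the top few degrees $d_1, \dots, d_{\ell+1}$.

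Since $\sum_i d_i = km$ with each $d_i \le D$, and in our range $km/D$ lies between $\ell$ and $\ell+1$ to leading order, convexity of $\binom{\cdot}{t}$ together with the cap forces the optimal degree profile to take one of two shapes: either (a) the top $\ell$ degrees equal $D$ with the remaining degrees small, or (b) the top $\ell+1$ degrees are all within $o(D)$ of $D$. Profile (a) forces $\cF$ to contain the full stars through the $\ell$ heaviest vertices, which is conclusion (i). In profile (b), any set of $\cF$ lying outside the union of the $\ell+1$ near-full stars could be swapped in to complete a missing slot of one of them, strictly increasing $\inter(\cF, t)$, so $\cF$ must equal this union, yielding conclusion (ii).

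The main technical obstacle I anticipate is producing error estimates sharp enough to rule out hybrid degree profiles --- for instance $\ell - 1$ full plus two near-full coordinates, or $\ell$ full plus one intermediate degree --- uniformly across all $m$ in the stated range and all $t \ge 2$. Making the Hilton--Milner control precise enough to distinguish cleanly between conclusions (i) and (ii) while excluding every intermediate configuration will require a delicate local-swap argument of the kind developed in \cite{dasgansud13}, comparing $\inter(\cdot, t)$ directly under single-set replacements in the top stars.
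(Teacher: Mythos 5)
Your plan shares the paper's overall philosophy (reduce to counting stars, bound the non-trivial contributions, then optimise a degree functional by local swaps), but two of the load-bearing steps are missing in a way that would defeat the argument.

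First, convexity over the degree sequence alone cannot produce the $\ell$ versus $\ell+1$ dichotomy. The only constraints you invoke are $\sum_i d_i = km$ and $d_i \le D := \binom{n-1}{k-1}$, and under these, $\sum_i \binom{d_i}{t}$ is maximised by pushing roughly $km/D \approx k(\ell + \alpha)$ coordinates to the cap $D$ --- a factor of $k$ too many. What actually limits the number of near-full degrees is the structural fact that the near-full stars must pairwise overlap in only $O(n^{k-2})$ sets, so $\ell+2$ of them would already account for more than $m$ sets. The paper converts this into a usable tool by first producing a cover $X$ of $O(1)$ vertices of degree $\Omega(D)$ (via a set-replacement argument: a set $F$ disjoint from all high-degree vertices meets few sets, so swapping it into the heaviest star gains), and then optimising degrees only within a minimal subcover $[r]$. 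Without some such cover argument, the convexity step has no hold on the problem. Your remark that a set outside the $\ell+1$ near-full stars ``could be swapped in'' is exactly the right instinct, but it needs to come earlier and do more work --- it is what establishes that only $O(1)$ vertices can carry significant degree.

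Second, the Hilton--Milner theorem bounds the \emph{size} of a single non-trivially intersecting family, not the \emph{number} of non-trivially intersecting $t$-subfamilies of $\cF$. Passing from the former to the latter is the content of the paper's Lemma \ref{lem:nontrivial}, which runs a fairly delicate enumeration (choose a centre $S$, a maximal matching $\cM$ in the residual sets, force the remaining sets to hit $\cM$) and shows the count is $O\bigl(n^{-t/4k}\binom{D}{t}\bigr)$. This uniformity in $t$ is essential, since $t$ may be as large as $\binom{n-1}{k-1}$ and the ``$\ell$ full stars'' and ``$\ell+1$ almost-full stars'' cases are separated only by an $o(1)\binom{D}{t}$ margin. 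You would need to supply this lemma or an equivalent.

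Finally, you flag the hybrid-profile problem as the main obstacle, correctly, but a sketch of its resolution is needed for a complete proof. The paper handles it by a further swap argument inside the minimal cover: for any two non-full cover vertices $i,j$, a set meeting the cover only at $i$ sees at most $d_i + o(D)$ other sets, so swapping it into $j$'s star forces $d_j \le d_i + o(D)$; hence all non-full cover degrees are nearly equal. Combined with the size constraint, this leaves only $r = \ell$ (impossible if there are fewer than $\ell$ full stars) or $r = \ell + 1$ with $\alpha = 1 - o(1)$, which is exactly case (ii). Without this local-swap refinement the argument cannot exclude, say, $\ell - 1$ full stars plus two stars of degree about $\tfrac{1}{2}D$.
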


Using Theorem~\ref{thm:structure}, we are able to determine exactly the most probably intersecting hypergraphs of some particular sizes, as given below.

\begin{COR} \label{cor:hypergraphs}
Let $k$ and $\ell$ be integers, and suppose $n \ge n_0(k,\ell)$ and $\binom{n}{k} - \binom{n-\ell}{k} \le m \le \binom{n}{k} - \binom{n-\ell}{k} + n - \ell - k + 1$.  For this range of parameters, $\cL_{n,k}(m)$ is the most probably intersecting $k$-uniform hypergraph on $[n]$ with $m$ sets.
\end{COR}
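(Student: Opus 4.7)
The plan is to use Theorem~\ref{thm:structure} to pin down the approximate structure of an optimal family, and then a finer shifting argument to identify the remaining sets.

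Expanding \eqref{eqn:probtocount}, a family that simultaneously maximises $\inter(\cF,t)$ for every $t$ is a most probably intersecting family for every $p$, so it suffices to show that $\cL_{n,k}(m)$ achieves this for every $t \ge 2$. Applying Theorem~\ref{thm:structure}, any such maximiser is of type (i) or (ii); but in our narrow window type (ii) is excluded, since $\ell+1$ almost-full stars have union of size at least $(1-o(1))\bigl(\binom{n}{k}-\binom{n-\ell-1}{k}\bigr)$, and the gap $\binom{n-\ell}{k}-\binom{n-\ell-1}{k} = \binom{n-\ell-1}{k-1}\gg n-\ell-k+1$ forces this to exceed $m$ for $n\ge n_0(k,\ell)$. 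Hence $\cF$ contains $\ell$ full stars, which after relabelling we place at $[\ell]$, giving $\cF = \mathcal{S}\cup\cG$ where $\mathcal{S} = \{A\in\binom{[n]}{k}:A\cap[\ell]\ne\emptyset\}$ and $\cG\subseteq\binom{\{\ell+1,\ldots,n\}}{k}$ has size $j := m-|\mathcal{S}| \le n-\ell-k+1$.

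A useful observation is that every $A\in\binom{\{\ell+1,\ldots,n\}}{k}$ is disjoint from the same number of sets in $\mathcal{S}$, namely $\binom{n-k}{k}-\binom{n-k-\ell}{k}$, so the only way $\cG$ influences $\inter(\cF,t)$ is through pairs within $\cG$ and higher-order $\mathcal{S}$--$\cG$ correlations. Since $\cG_{\mathrm{lex}} := \{\{\ell+1,\ldots,\ell+k-1,x\}:\ell+k\le x\le \ell+k-1+j\}$ is an intersecting family, taking $\cG = \cG_{\mathrm{lex}}$ (so that $\cF = \cL_{n,k}(m)$) immediately gives the counting version ($t=2$). For $t\ge 3$ we invoke the shifting technique of Russell and Walters to further assume $\cF$, and hence $\cG$, is left-compressed, and then apply problem-specific swaps: replacing each $A\in\cG\setminus\cG_{\mathrm{lex}}$ by an appropriate $A'\in\cG_{\mathrm{lex}}\setminus\cG$ weakly increases $\inter(\cF,t)$ for every $t$, and iterating produces $\cG_{\mathrm{lex}}$.

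The main obstacle is designing and verifying these final swaps. Left-compression alone does not pin down $\cG = \cG_{\mathrm{lex}}$ (a colex initial segment of $\binom{\{\ell+1,\ldots,n\}}{k}$ is also left-compressed but not of the claimed form), so one must extract an improving swap from each non-lex left-compressed $\cG$. Comparing $\inter(\cF,t)$ with $\inter((\cF\setminus\{A\})\cup\{A'\},t)$ reduces to comparing intersecting $(t-1)$-subfamily counts of the two neighbourhoods $\{B\in\cF\setminus\{A\}:B\cap A\ne\emptyset\}$ and $\{B\in\cF\setminus\{A\}:B\cap A'\ne\emptyset\}$, which are generally incomparable as sets; the delicate step is showing that the latter nevertheless has at least as many intersecting subfamilies, exploiting that $A'$ uses smaller labels appearing in more sets of $\cF$.
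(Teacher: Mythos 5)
Your reduction to the counting problem via \eqref{eqn:probtocount}, your exclusion of case~(ii) of Theorem~\ref{thm:structure} (since here $m_1 := m - \binom{n}{k} + \binom{n-\ell}{k} \le n-\ell-k+1 = o\left(\binom{n-1}{k-1}\right)$, whereas~(ii) requires $(\ell+1)(1-o(1))\binom{n-1}{k-1}$ edges), and your observation that every $A$ avoiding $[\ell]$ is disjoint from the same number of sets in $\mathcal{S}$ are all correct, and the $t=2$ case does follow immediately. But the heart of the corollary is the $t \ge 3$ case, and there your proposal has a gap that you yourself flag as ``the main obstacle'' and ``the delicate step'': you propose replacing $A \in \cG \setminus \cG_{\mathrm{lex}}$ by $A' \in \cG_{\mathrm{lex}} \setminus \cG$ and claiming this weakly increases $\inter(\cF,t)$, but you do not show it. Comparing the intersecting $(t-1)$-subfamily counts of the two neighbourhoods is genuinely nontrivial (the neighbourhoods are incomparable, as you note), and appealing to Russell--Walters compressions does not help: the paper itself records that Russell and Walters obtained only left-compressedness, not the identity of the optimal compressed family, and you correctly point out that a colex initial segment of $\binom{\{\ell+1,\dots,n\}}{k}$ is also left-compressed. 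So the proposal as written is a plan, not a proof.

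The paper's Proposition~\ref{prop:exact} takes a different route that avoids pairwise swaps entirely. Writing $\cF = \cF_0 \cup \cF_1$ with $\cF_0$ the $\ell$ full stars, it partitions $\inter(\cF,t)$ according to $\cH := $ (the intersecting subfamily) $\cap\, \cF_1$, defines $\mathrm{ext}(\cH)$ to be the number of ways to extend $\cH$ by sets of $\cF_0$ to an intersecting family of size $t$, and estimates $\mathrm{ext}(\cH)$ up to lower-order terms purely in terms of $a := \left| \cap_{H \in \cH} H \right|$: the dominant contributions come from stars whose centre is either one of the $\ell$ star-centres of $\cF_0$ or one of the $a$ common elements of $\cH$, giving $\mathrm{ext}(\cH) \approx \ell \binom{a\binom{n-2}{k-2}}{t-h} + a\binom{\ell\binom{n-2}{k-2}}{t-h}$, which is increasing in $a$. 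Since the terms with $h \le 1$ do not depend on the choice of $\cF_1$, and since $m_1 \le n-\ell-k+1$ guarantees that $a = k-1$ is simultaneously attainable for every $\cH \subseteq \cF_1$ with $|\cH| \ge 2$ (namely by taking all of $\cF_1$ to share a fixed $(k-1)$-set), this determines the optimal $\cF_1$ up to isomorphism without any swap comparison. That monotonicity-in-$a$ estimate is exactly the missing ingredient your swap argument would need, and it is where the actual work lies.
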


We note that for both graphs and hypergraphs we actually prove something stronger, showing $\cL_{n,k}(m)$ simultaneously maximises the number of intersecting subfamilies of size $t$ for all $t$, as stated in Propositions \ref{prop:graphcount} and \ref{prop:exact}.  In particular, this implies the most probably intersecting families in these ranges do not depend on the underlying probability $p$.  Our proofs also extend to show the most probably intersecting hypergraphs are essentially unique.

\subsection{Outline and notation}

The remainder of this paper is organised as follows.  In Section \ref{sec:graphs}, we study the most probably intersecting graphs, proving Theorem \ref{thm:graphs}.  In Section \ref{sec:hypergraphs}, we extend these methods to hypergraphs, and prove Theorem \ref{thm:structure}.  Finally, in Section \ref{sec:conclusion}, we provide some concluding remarks and open questions.

\medskip

Our notation is fairly standard.  We denote by $[n]$ the first $n$ positive integers, and for any set $X$, we write $\binom{X}{k}$ for the subsets of $X$ of size $k$.  $\cL_{n,k}(m)$ represents the first $m$ sets in $\binom{[n]}{k}$ in the lexicographic order, while $\cC_{n,k}(m)$ is the corresponding initial segment of the colexicographic order; see the paragraph preceding Theorem \ref{thm:ahlswedekatona} for a description of these orders.

If $\cF$ is a $k$-uniform family of subsets of $[n]$, then for any vertex $i \in [n]$, we write $d_i$ for its degree; that is, the number of sets containing $i$.  A subset $X \subset [n]$ of elements \emph{covers} $\cF$ if for every set $F \in \cF$, we have $F \cap X \neq \emptyset$.  We let $\inter(\cF,t)$ denote the number of intersecting subfamilies of $\cF$ of size $t$.  We say that an intersecting family $\cG$ is \emph{trivially intersecting} if $\cap_{G \in \cG} G \neq \emptyset$, and we call such a family a $\emph{star}$ with centre $\cap_{G \in \cG} G$.

\section{Intersecting graphs} \label{sec:graphs}

In this section we prove Theorem \ref{thm:graphs}, thus showing the initial segment of the lexicographic order is the most probably intersecting graph when the graphs in question are not too dense.  We recall the statement below.

\setcounter{section}{1}
\setcounter{THM}{2}

\begin{THM}
For $n, \ell$ and $m$ satisfying $n \ge 32 \ell$ and $0 \le m \le \binom{n}{2} - \binom{n-\ell}{2}$, the lexicographic graph $\cL_{n,2}(m)$ is the most probably intersecting graph on $[n]$ with $m$ edges.
\end{THM}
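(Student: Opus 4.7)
The plan is to prove the stronger Proposition~\ref{prop:graphcount}: that for every integer $t \ge 0$, the graph $\cL_{n,2}(m)$ maximises $\inter(\cF, t)$ over all graphs $\cF$ on $[n]$ with $m$ edges. In view of the expansion~\eqref{eqn:probtocount}, Theorem~\ref{thm:graphs} then follows at once as a non-negative combination of these inequalities. The key starting observation is that any two distinct edges of a graph share at most one vertex, so every intersecting family of edges is either a star (all edges through a common vertex) or, if it has exactly three edges, possibly a triangle. Writing $d_v$ for the degree of $v$ in $\cF$ and $T(\cF)$ for the number of triangles, this gives
\[
\inter(\cF, t) = \sum_{v \in [n]} \binom{d_v}{t} \text{ for } t \ne 3, \qquad \inter(\cF, 3) = \sum_{v \in [n]} \binom{d_v}{3} + T(\cF).
\]
So the task reduces to showing that $\cL_{n,2}(m)$ simultaneously maximises $\sum_v \binom{d_v}{t}$ for each $t \ge 2$ and also maximises $T(\cF)$.

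First I would reduce to the left-compressed case, using the $(i, j)$-compressions of Russell and Walters~\cite{russwalt13}. For each $i < j$, the $(i, j)$-compression replaces each edge $\{j, k\} \in \cF$ with $\{i, k\}$ whenever $k \ne i$ and $\{i, k\}$ is not already in $\cF$. This preserves $m$, increases $d_i$ and decreases $d_j$ by equal amounts, and so does not decrease $\sum_v \binom{d_v}{t}$ by convexity of $\binom{x}{t}$. A short case analysis shows that the triangle count is preserved as well (triangles on $\{j,b,c\}$ either survive or are replaced by triangles on $\{i,b,c\}$). Iterating produces a left-compressed graph $\cF^*$ with $\inter(\cF^*, t) \ge \inter(\cF, t)$ for every $t$, so I may assume $\cF$ itself is left-compressed.

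Next comes a structural step that locates $\cF$ within the fan $\{e : e \cap [\ell] \ne \emptyset\}$. Using left-compression together with the edge bound $m \le \binom{n}{2} - \binom{n-\ell}{2}$, one argues that an edge of $\cF$ with both endpoints outside $[\ell]$ would force enough other edges into $\cF$ to violate this bound; the slack provided by $n \ge 32\ell$ is what makes this counting close. Once $\cF$ is known to be supported in this fan, it may be compared directly with $\cL_{n,2}(m)$, which itself lives in the same fan.

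The final step is an edge-exchange argument. If $\cF \ne \cL_{n,2}(m)$, I would pick a lex-earliest edge $e \in \cL_{n,2}(m) \setminus \cF$ and some $f \in \cF \setminus \cL_{n,2}(m)$, so that $e$ precedes $f$ in lex order, and show that replacing $f$ by $e$ does not decrease $\inter(\cdot, t)$ for any $t$. Iterating drives $\cF$ to $\cL_{n,2}(m)$. The main obstacle is handling the triangle term at $t = 3$: a single swap can create and destroy several triangles at once, so one must argue that the gain in $\sum_v \binom{d_v}{3}$, coming from redistributing a unit of degree from a later vertex toward an earlier, already higher-degree vertex (as guaranteed by left-compression), outweighs any net triangle loss. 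The hypothesis $n \ge 32\ell$ is precisely where this quantitative dominance comes from, since the degree-gain terms scale with $n$ while triangle discrepancies are controlled by a function of $\ell$.
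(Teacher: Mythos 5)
Your compression reduction in the first step is reasonable, and the formula $\inter(\cF,t) = \sum_v \binom{d_v}{t}$ for $t \ne 3$ plus a triangle term is correct for $t \ge 2$ (for $t=0,1$ it overcounts, but those cases are trivial). The genuine problem is your second step. You assert that a left-compressed graph with $m \le \binom{n}{2} - \binom{n-\ell}{2}$ edges must be contained in the fan $\{e : e \cap [\ell] \ne \emptyset\}$, on the grounds that an edge avoiding $[\ell]$ would force too many other edges. This is false. If $\{a,b\} \in \cF$ with $\ell < a < b$, left-compression forces only the pairs $\{a',b'\}$ with $a' \le a$, $b' \le b$, $a' < b'$, of which there are $ab - \binom{a+1}{2}$; for the extremal such edge $\{\ell+1,\ell+2\}$ this is only $\binom{\ell+2}{2} = O(\ell^2)$, far below the budget $m \approx \ell n$. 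Concretely, $K_{\ell+2}$ on $[\ell+2]$ is left-compressed, has $\binom{\ell+2}{2} \le \binom{n}{2} - \binom{n-\ell}{2}$ edges whenever $n \ge \ell + 2$, and is not contained in the fan; it extends to a left-compressed graph of any size $m$ in the stated range while keeping the offending edge $\{\ell+1,\ell+2\}$. The hypothesis $n \ge 32\ell$ only widens the budget and cannot close this gap. Your third step then has nothing firm to stand on.

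The paper avoids the fan entirely and takes a different route. It proves Proposition~\ref{prop:trivint} (the pure star count $f(G,t) = \sum_v \binom{d_v}{t}$) by induction on $m+t$, and the inductive step has two cases: if the extremal $G$ contains a full star at vertex $n$, peel it off via the identity $f(G,t) = f(\tilde G, t) + f(\tilde G, t-1) + \binom{n-1}{t}$ where $\tilde G = G[[n-1]]$, then apply the inductive hypothesis to $\tilde G$; if instead $G$ has maximum degree $\Delta \le n-2$, a chain of shifting arguments (double-counting $t f(G,t)$ to locate a vertex of degree $\gtrsim 4^{-1/(t-1)}n$, showing the high-degree vertices form a small cover $X$, showing non-neighbours of the top-degree vertex are isolated, and finally moving $|X|-1$ edges from the lowest-degree cover vertex to a previously isolated vertex) strictly improves the objective while increasing $\Delta$, a contradiction. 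The triangle term for $t=3$ is then handled by slightly augmenting this argument in Proposition~\ref{prop:graphcount}. Replacing your flawed structural step requires something equivalent to this "peel full stars / contradict maximum degree" mechanism, not a lex-exchange within the fan.
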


\setcounter{section}{2}
\setcounter{THM}{0}

In order to prove this theorem, we use \eqref{eqn:probtocount} to convert the problem into one of counting intersecting subgraphs of a given size.  At the heart of the proof, therefore, is the following proposition, which shows that in this range of densities, $\cL_{n,2}(m)$ maximises the number of intersecting subgraphs of size $t$ for any $t$.  Proposition \ref{prop:graphcount} can be viewed as an extension of Theorem \ref{thm:ahlswedekatona} to larger intersecting subgraphs.

\begin{PROP} \label{prop:graphcount}
Suppose $t \ge 0$, and $n$ and $\ell$ satisfy $n \ge 2^{2 + 6 / (t-1)} \ell$.   Then, for any $0 \le m \le \binom{n}{2} - \binom{n-\ell}{2}$, the lexicographic graph $\cL_{n,2}(m)$ maximises $\inter(G,t)$ over all graphs $G$ on $[n]$ with $m$ edges.
\end{PROP}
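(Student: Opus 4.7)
My plan is to reduce $G$ to a left-compressed graph via shifting, and then compare its degree sequence with that of $\cL_{n,2}(m)$. First, apply the $(i,j)$-compressions developed in \cite{dasgansud13}: each shift $S_{ij}$ satisfies $\inter(S_{ij}(G), t) \ge \inter(G, t)$ via an injection between intersecting $t$-subfamilies, so we may assume $G$ is left-compressed, i.e.\ $N_G(v) = \{1, \ldots, k_v\} \setminus \{v\}$ for each $v$, with support contained in a prefix of $[n]$.

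Second, split the count by the shape of an intersecting $t$-family. In a graph, a pairwise intersecting family of $t \ge 2$ edges is either a star or, only when $t = 3$, a triangle; hence $\inter(G, t) = \sum_v \binom{d_v}{t}$ for $t \ne 3$, while $\inter(G, 3) = \sum_v \binom{d_v}{3} + K_3(G)$, where $K_3(G)$ denotes the triangle count. The star term is controlled by the elementary inequality
\[
\sum_{v \in S} d_v \;=\; 2|E(S)| + |E(S, V \setminus S)| \;\le\; |E(S)| + m \;\le\; \binom{|S|}{2} + m
\]
valid for every $S \subseteq [n]$, combined with $d_v \le n-1$. In our range $m \le \binom{n}{2} - \binom{n-\ell}{2}$, the lex graph $\cL_{n,2}(m)$ meets these bounds greedily (stacking $n-1$ into vertex $1$, then vertex $2$, and so on); a convexity / local-swap argument then yields $\sum_v \binom{d_v}{t} \le \sum_v \binom{d^*_v}{t}$, where $d^*$ is the lex degree sequence, and equality forces $G \cong \cL_{n,2}(m)$ by uniqueness of realisation. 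This closes the proof for $t \ge 4$; the case $t = 2$ reduces to Theorem~\ref{thm:ahlswedekatona} (our $m$ lies safely inside the Ahlswede-Katona range when $n \ge 256 \ell$), and $t \in \{0, 1\}$ is trivial.

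The main obstacle is the case $t = 3$, where one must additionally control $K_3(G)$ --- and here $\cL$ is \emph{not} optimal for the triangle count alone, since colex beats it. My plan is to use Kruskal-Katona to bound $K_3(G) = O(m^{3/2}) = O((\ell n)^{3/2})$, and compare this with the dominant star contribution $\sum_v \binom{d^*_v}{3} \gtrsim \ell \binom{n-1}{3} \sim \ell n^3 / 6$; the threshold $n \ge 32 \ell$ is precisely what makes the star term dwarf the triangle term. The delicate regime is when $G$ is close to lex in degree sequence, since the star gap and the triangle excess shrink simultaneously; one then needs a quantitative accounting showing that the star gap always absorbs $K_3(G) - K_3(\cL)$. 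This balance is the source of the $2^{6/(t-1)}$ factor in the hypothesis, which tightens as $t \to 2$ and loosens as $t \to \infty$, where stars overwhelmingly dominate.
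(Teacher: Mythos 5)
Your central step---that the lex degree sequence maximises $\sum_v\binom{d_v}{t}$ by ``convexity / local swap''---is not justified, and the most natural reading via majorization actually fails. Even inside the proposition's range and even among left-compressed graphs, the lex degree sequence need not dominate its competitors. Take $t=3$, $\ell=1$, $n=32$, $m=31$ (so $\cL_{n,2}(m)$ is a full star, sorted degree sequence $(31,1,\ldots,1,0)$), and compare the left-compressed graph in which vertex $1$ is joined to $\{2,\ldots,30\}$ and vertex $2$ is joined to $\{3,4\}$, with sorted degree sequence $(29,3,2,2,1,\ldots,1,0,0)$: its third and fourth partial sums ($34$ and $36$) strictly exceed those of lex ($33$ and $34$), so neither sequence majorizes the other and Schur-convexity gives nothing. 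The proposition is still true here ($\binom{31}{3}=4495>\binom{29}{3}+1=3655$), but the comparison cannot be read off the degree sequences alone; the constraint ``is the degree sequence of a graph with $m$ edges on $[n]$'' is essential and is not captured by $\sum_{v\in S}d_v\le\binom{|S|}{2}+m$ plus convexity. The paper's Proposition~\ref{prop:trivint} is proved instead by induction on $m+t$: if $G$ contains a full star, peel it off and use $\binom{d}{t}=\binom{d-1}{t}+\binom{d-1}{t-1}$ to reduce to two smaller instances; if not, a double-count forces a vertex of degree at least $4^{-1/(t-1)}n$, from which one extracts a small high-degree cover $X$, and then shifting $|X|-1$ edges onto a previously isolated vertex is shown to strictly increase $\sum_v\binom{d_v}{t}$ while raising the maximum degree, a contradiction. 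That shifting bookkeeping is what replaces the (false) majorization claim.

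Your $t=3$ case is also left open by your own admission. Bounding $K_3(G)=O(m^{3/2})$ shows the triangle count is globally small compared with $\ell\binom{n-1}{3}$, but that says nothing in the regime where $G$ is nearly tied with $\cL_{n,2}(m)$ in star count, which is exactly where you concede ``one then needs a quantitative accounting.'' The paper closes this by re-running the same induction-and-shifting proof with the triangle contribution carried through every inequality: an edge $\{i,j\}$ lies in at most $\binom{d_i-1}{2}+\binom{d_j-1}{2}+\min\{d_i,d_j\}-1\le 2\binom{\Delta}{2}$ intersecting triples, the cover $X$ is shown to be a clique, and the final shift of $s-1$ edges gains $3\binom{s-1}{2}$ stars against a net loss of at most $\binom{s}{2}$ triangles, with $s=2$ handled by a separate calculation. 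It is this local accounting, not a global size estimate, that makes $t=3$ go through, and it is the piece your outline is missing.
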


Note that in the case of graphs, there are only two possible intersecting structures: the star and, when $t=3$, the triangle.  We will show that there are relatively few triangles, and hence the number of intersecting subgraphs is essentially determined by the number of stars.  By considering the central vertex of a star, we find that, for $t \ge 2$, the number of stars in a graph $G$ is given by $\sum_{i \in V(G)} \binom{d_i}{t}$.  As it is cleaner to first count only the stars, we separate this (main) case into the following proposition.

\begin{PROP} \label{prop:trivint}
Suppose $t \ge 0$, and $n$ and $\ell$ satisfy $n \ge 2^{2 + 6 / (t-1)} \ell$.  Then, for any $0 \le m \le \binom{n}{2} - \binom{n-\ell}{2}$, the lexicographic graph $\cL_{n,2}(m)$ maximises $f(G,t) = \sum_i \binom{d_i}{t}$ over all graphs $G$ on $[n]$ with $m$ edges.
\end{PROP}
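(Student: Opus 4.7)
The plan is to prove the proposition by induction on $n$, combining a shifting reduction with a recursive decomposition that peels off the maximum-degree vertex.

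By the compression arguments of Das-Gan-Sudakov adapted to the counting function $f$, I may assume without loss of generality that $G$ is left-compressed. A standard consequence of left-compressedness is the following structural property: every non-isolated vertex $v \neq 1$ is adjacent to $1$. Indeed, if $v$ had a neighbor $v' \neq 1$ with $\{1, v\} \notin G$, then the $(1, v')$-shift would replace $\{v, v'\}$ by $\{1, v\}$, contradicting left-compressedness. Setting $D := d_1$, it follows that $G$ is supported on $\{1\} \cup N(1)$ and that $G' := G[N(1)]$ is itself a left-compressed graph on $D$ vertices with $m - D$ edges.

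Using Pascal's identity applied to each $j \in N(1)$ (noting $d_j(G) = d_j(G') + 1$), I obtain the recursion
\[ f(G, t) = \binom{D}{t} + f(G', t) + f(G', t - 1), \qquad t \ge 1. \]
The lexicographic graph $\cL_{n, 2}(m)$ satisfies the same recursion, with $D = n - 1$ and $G' = \cL_{n - 1, 2}(m - n + 1)$ whenever $m \ge n - 1$. The base case $m \le n - 1$ reduces to the inequality $\sum_i \binom{d_i(G)}{t} \le \binom{m}{t}$ for $t \ge 2$, which follows from the injection sending each $t$-star in $G$ to its $t$-subset of edges --- well-defined since two distinct vertices of a simple graph cannot span the same $t \ge 2$ edges. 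For $m > n - 1$, the inductive hypothesis applied to $G'$ --- with $\ell$ replaced by an appropriate $\ell' \le \ell$ so that $m - D \le \binom{D}{2} - \binom{D - \ell'}{2}$ --- bounds $f(G', s) \le f(\cL_{D, 2}(m - D), s)$ for $s \in \{t - 1, t\}$. Substituting yields $f(G, t) \le \phi(D)$, where
\[ \phi(D) := \binom{D}{t} + f(\cL_{D, 2}(m - D), t) + f(\cL_{D, 2}(m - D), t - 1). \]
It then remains to show $\phi(D) \le \phi(\min(m, n - 1)) = f(\cL_{n, 2}(m), t)$.

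The main obstacle is this maximization of $\phi$. I would establish $\phi(D + 1) \ge \phi(D)$ by a telescoping argument, which rearranges to
\[ \binom{D}{t - 1} \ge \bigl[f(\cL_{D, 2}(m - D), t) - f(\cL_{D + 1, 2}(m - D - 1), t)\bigr] + \bigl[\text{same with } t - 1\bigr]. \]
The left side is the star-count gain from vertex $1$ acquiring one more neighbor; the right side is the loss as the lex sub-graph sheds one edge while being spread over one additional vertex. The hypothesis $n \ge 2^{2 + 6/(t - 1)} \ell$ is crucial precisely here: it forces $m$ to be small enough relative to $n$ that the degree sequence of $\cL_{D, 2}(m - D)$ stays well below $D$, keeping the right-hand side controlled by the left. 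Careful tracking of $\ell$ through the induction --- it drops by at most one as we pass from $G$ to $G'$ --- ensures the parameter condition remains valid in each sub-problem.
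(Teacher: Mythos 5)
Your decomposition and recursion are sound, and the base case $m\le n-1$ is handled correctly, but the heart of your argument has a genuine gap — in fact two related ones.

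First, the claim that $\phi(D+1)\ge\phi(D)$ is simply false in the relevant range. Consider $G=K_j$ on $[n]$ (which is left-compressed) with $m=\binom{j}{2}$, so $D=j-1$ and $G'=K_{j-1}$. One computes, using Pascal's rule twice,
\[
\phi(j)-\phi(j-1)=\binom{j}{t}-3\binom{j-1}{t}+2\binom{j-2}{t}=\binom{j-2}{t-2}-\binom{j-2}{t-1},
\]
which is strictly negative whenever $j>2t$. So $\phi$ is not monotone once $D$ grows past roughly $2t$, and the telescoping step you propose cannot yield $\phi(D)\le\phi(n-1)$. The proposition is still true, but only because $\phi(D)$ is a (loose) upper bound on $f(G,t)$; proving it requires an argument that does not pass through monotonicity of $\phi$.

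Second, the inductive hypothesis does not in fact apply to $G'$ the way you assert. When $D$ is noticeably smaller than $n-1$, the graph $G'$ on $D$ vertices with $m-D$ edges is much denser than $G$; to fit $G'$ into the hypothesis you would need $\ell'$ of order $D$, but then the requirement $D\ge 2^{2+6/(t-1)}\ell'$ fails. Your remark that ``$\ell$ drops by at most one as we pass from $G$ to $G'$'' is only true when $D=n-1$, i.e.\ when vertex $1$ already has a full star — but that is precisely what needs to be proved. The paper's proof closes this circle differently: it first argues by shifting that any extremal $G$ without a full star can be improved (by moving edges toward an isolated vertex to increase the maximum degree), so an extremal graph must contain a full star; only then does it peel off the full star and invoke induction, at which point the parameters do descend cleanly. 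Your compression reduction and Pascal recursion match the paper's setup in spirit, but the crucial step of forcing $D=n-1$ is missing and cannot be patched by the monotonicity argument you outline.
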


We now begin by showing how Theorem \ref{thm:graphs} follows easily from Proposition \ref{prop:graphcount}.

\begin{proof}[Proof of Theorem \ref{thm:graphs}]
We wish to find a graph $G$ on $[n]$ with $m$ edges that maximises $\prob(G_p \textrm{ is intersecting})$.  Recall Equation \eqref{eqn:probtocount}:
\[ \prob(G_p \textrm{ is intersecting}) = \sum_{t=0}^m \inter(G,t) p^t(1-p)^{m-t}. \]

By Theorem \ref{thm:ahlswedekatona} for $t=2$ and Proposition \ref{prop:graphcount} otherwise, among all graphs on $[n]$ with $m$ edges, $\inter(G,t)$ is maximised by $\cL_{n,2}(m)$ for all $t \ge 0$.  Thus we have
\begin{align*}
  \prob(G_p \textrm{ is intersecting}) &= \sum_{t=0}^m \inter(G,t) p^t(1-p)^{m-t} \\
  &\le \sum_{t=0}^m \inter(\cL_{n,2}(m),t) p^t(1-p)^{m-t} \\
  &= \prob(\cL_{n,2}(m)_p \textrm{ is intersecting}),
\end{align*}
and so $\cL_{n,2}(m)$ is the most probably intersecting graph, as claimed.
\end{proof}

In the remainder of this section, we seek to prove Proposition \ref{prop:graphcount}.  We begin by dealing with the cleaner case of counting stars, namely Proposition \ref{prop:trivint}.

\begin{proof}[Proof of Proposition \ref{prop:trivint}]

Our proof is by induction on $m + t$.  Note that when $t = 0$, the statement is obvious, and for $t=1$, $f(G,1) = \sum_i \binom{d_i}{1} = \sum_i d_i = 2m$, and is thus maximised by $\cL_{n,2}(m)$ and, indeed, by any other graph with $m$ edges.

For the case $t=2$, note that since $\ell \le 2^{-2 - 6/(t-1)}n \le \frac14 n$, we have at most $\binom{n}{2} - \binom{\frac{3n}{4}}{2} < \frac12 \binom{n}{2} - \frac12 n$ edges.  Hence, by Theorem \ref{thm:ahlswedekatona}, it is known that $\cL_{n,2}(m)$ maximises the number of intersecting pairs of edges, which is precisely the quantity $f(G,2)$.

Moreover, when $m \le n-1$, it is easy to see that $\cL_{n,2}(m)$ is again optimal.  Indeed, $f(G,t) = \sum_i \binom{d_i}{t}$ counts the number of $t$-edge stars in $G$.  For $m \le n-1$, $\cL_{n,2}(m)$ is itself a star, and thus all subgraphs of $t$ edges are stars.  Clearly, $f(\cL_{n,2}(m),t) = \binom{m}{t}$ is optimal.

\medskip

Hence we may assume $t \ge 3$ and $m \ge n$.  Suppose first that $G$ is an extremal graph containing a full star; without loss of generality, we may assume it has all edges containing the vertex $n$.  Let $\tilde{G}$ be the induced subgraph of $G$ on the vertices $[n-1]$.  Note that for all $1 \le i \le n-1$, the degrees in $\tilde{G}$ are given by $\tilde{d}_i = d_i - 1$, as we lose the edge to $n$.  Thus we have
\begin{align*}
	f(G,t) &= \sum_i \binom{d_i}{t} = \sum_{i = 1}^{n-1} \binom{\tilde{d}_i+1}{t} + \binom{n-1}{t}\\
	&= \sum_{i=1}^{n-1} \left( \binom{\tilde{d}_i}{t} + \binom{\tilde{d}_i}{t-1} \right) + \binom{n-1}{t} = f(\tilde{G},t) + f(\tilde{G},t-1) + \binom{n-1}{t}.
\end{align*}

By the induction hypothesis, both $f(\tilde{G},t)$ and $f(\tilde{G},t-1)$ are maximised by $\tilde{G} = \cL_{n-1,2}(m - (n-1))$.  Adding to this the full star with centre $n$, we obtain $\cL_{n,2}(m)$, thus proving its optimality.

\medskip

Now suppose $G$ is an extremal graph with the largest possible maximum degree $\Delta$, and that $\Delta \le n-2$.  This means for any edge $e$ and vertex $i$, we can replace $e$ by an edge containing $i$.  This shifting operation, coupled with the assumption of optimality, will allow us to determine the structure of $G$, and eventually derive a contradiction.

To begin with, we establish a lower bound for $f(G,t)$.  Let $1 \le r \le \ell - 1$ be such that $\binom{n}{2} - \binom{n-r}{2} < m \le \binom{n}{2} - \binom{n-r-1}{2}$.  In this range, $\cL_{n,2}(m)$ consists of $r$ full stars and a partial star.  Thus if $G$ is extremal, we must have $f(G,t) \ge f(\cL_{n,2}(m),t) \ge  r \binom{n-1}{t} + (n-r) \binom{r}{t} > r \binom{n-1}{t}$.

We shall now double-count to deduce the existence of a high-degree vertex.  Since every star we count in $f(G,t)$ contains $t$ edges, and the number of stars an edge is in is determined by the degrees of its endpoints, we have
\[ t f(G,t) = \sum_{e = \{i,j\} \in E(G)} \left( \binom{d_i - 1}{t-1} + \binom{d_j - 1}{t-1} \right) \le 2m \binom{ \Delta - 1}{t-1}, \]
where $\Delta$ is the maximum degree in $G$.  Applying the previous lower bound on $f(G,t)$ gives $\binom{\Delta - 1}{t-1} \ge \frac{rt}{2m} \binom{n-1}{t} = \frac{r(n-1)}{2m} \binom{n-2}{t-1} > \frac14 \binom{n-2}{t-1}$, since $m < (r+1)(n-1)$.  Since $\binom{\alpha p}{q} \le \alpha^q \binom{p}{q}$ for $0 \le \alpha \le 1$ (see Lemma \ref{lem:binomial}), it follows that $\Delta > 4^{- 1 / (t-1)} (n-2) + 1 \ge 4^{- 1 / (t-1)}n - 1$.  Without loss of generality, suppose $1$ is a vertex of maximum degree.

\medskip

Consider any edge $e = \{i,j\} \in E(G)$, and suppose without loss of generality $d_i \ge d_j$.  $e$ is in $\binom{d_i - 1}{t-1} + \binom{d_j - 1}{t-1} \le 2 \binom{d_i - 1}{t-1}$ stars of size $t$.  On the other hand, if we replace $e$ with an edge containing $1$, we would create at least $\binom{\Delta}{t-1}$ new stars.  Hence, by the extremality of $G$, we must have $2 \binom{d_i -1 }{t-1} \ge \binom{\Delta}{t-1}$, so $d_i - 1 \ge 2^{-1/(t-1)} \Delta \ge 2^{ - 3 / (t-1)}n - 1$.

This implies that $X = \{x \in [n]: d_x \ge 2^{-3/(t-1)} n \}$ forms a vertex cover of $G$.  This cover cannot be too large, as we have the bound
\[ 2(r+1)(n-1) > 2m = \sum_i d_i \ge \sum_{x \in X} d_x \ge 2^{-3/(t-1)} n \card{X}, \]

and so $s = \card{X} < 2^{1 + 3/(t-1)} (r+1)$.

Moreover, let $j$ be any vertex not adjacent to $1$.  We claim that $j$ must in fact be isolated.  Suppose to the contrary there were some vertex $i \neq 1$ with the edge $\{i,j\} \in E(G)$.  This edge is contained in $\binom{d_i-1}{t-1} + \binom{d_j -1}{t-1}$ stars of $t$ edges.  If we were to replace $\{i,j\}$ with the edge $\{1,j\}$, we would create $\binom{\Delta}{t-1} + \binom{d_j-1}{t-1} > \binom{d_i-1}{t-1} + \binom{d_j-1}{t-1}$ stars, contradicting the optimality of $G$.

Thus it follows that all the edges of $G$ are supported on the $\Delta + 1$ vertices in the closed neighbourhood of $1$, and that $G$ has a cover $X$ of size $s < 2^{1 + 3/(t-1)} (r+1)$ vertices, all of which have degree at least $2^{-3/(t-1)} n$.  Note that the vertices outside the cover have degree at most $s$, as they can only be adjacent to vertices in $X$.

\medskip

To complete the argument, we shall show by shifting some edges that a graph with isolated vertices cannot be optimal.  Without loss of generality, let $X = [s]$ be the cover mentioned above, and further assume that $v$ has the lowest degree in $X$.  Note that $v$ has at least $2^{-3/(t-1)}n - (s-1) \ge s-1$ neighbours outside $X$, since $s < 2^{1 + 3/(t-1)} (r+1)$.  Let $G'$ be the graph obtained from $G$ by removing $s-1$ edges from $v$ to neighbours $N \subset X^c$, and replacing them with $s-1$ edges from a previously isolated vertex $w$ to the other $s-1$ vertices in $X$.  Note that these vertices all have degree at least $d_v$.

Comparing degrees in $G'$ to those in $G$, we find that the $s-1$ vertices in $X \setminus \{v\}$ have degree one larger, the degree of $v$ has decreased by $s-1$, the degrees of the $s-1$ vertices in $N$, which were previously at most $s$, have decreased by $1$, and $w$ now has degree $s-1$.  The change in the number of intersecting subgraphs is thus
\begin{align}
	&f(G',t) - f(G,t) \notag \\
	&= \sum_i \binom{d_i'}{t} - \sum_i \binom{d_i}{t} \notag \\
	&= \sum_{i \in X \setminus \{v\}} \left( \binom{d_i+1}{t} - \binom{d_i}{t} \right) + \left( \binom{d_v - s + 1}{t} - \binom{d_v}{t} \right) + \sum_{i \in N} \left( \binom{d_i -1 }{t} - \binom{d_i}{t} \right) + \binom{d_w'}{t} \notag\\
	&= \sum_{i \in X \setminus \{v\}} \binom{d_i}{t-1} - \sum_{j=1}^{s-1} \left( \binom{d_v-j+1}{t} - \binom{d_v - j}{t} \right) - \sum_{i \in N} \binom{d_i-1}{t-1} + \binom{s-1}{t} \notag \\
	&\ge \sum_{i \in X \setminus \{v\}} \binom{d_v}{t-1} - \sum_{j=1}^{s-1} \binom{d_v-j}{t-1} - (s-1) \binom{s-1}{t-1} \notag \\
	&= \sum_{j = 1}^{s-1} \left( \binom{d_v}{t-1} - \binom{d_v -  j}{t-1}\right) - (s-1) \binom{s-1}{t-1} \ge \sum_{j=1}^{s-1} j \binom{d_v - j}{t-2} - (s-1) \binom{s-1}{t-1} \label{ineq:graphslack} \\
	&\ge \sum_{j=1}^{s-1} j \binom{d_v - s+1}{t-2} - (s-1) \binom{s-1}{t-1} = \binom{s}{2} \binom{d_v - s+1}{t-2} - (s-1) \binom{s-1}{t-1} \notag \\
	&\ge \binom{s}{2} \binom{s+1}{t-2} - (s-1) \binom{s-1}{t-1} \qquad \textrm{[since $d_v \ge 2^{-3/(t-1)}n \ge 2^{2 + 3/(t-1)}(r+1) > 2s$]} \notag \\
	&= \binom{s}{2} \binom{s+1}{t-2} - \frac{(s-1)^2}{t-1} \binom{s-2}{t-2} \ge \binom{s}{2} \left( \binom{s+1}{t-2} - \binom{s-2}{t-2} \right) \ge 0, \notag
\end{align}
since $t \ge 3$.  Hence, by shifting edges, we can increase the maximum degree of $G$ without decreasing the objective function.  This contradicts the assumption that $G$ was optimal with the largest maximum degree.
\end{proof}

Finally, we show how to deduce the general case of Proposition \ref{prop:graphcount} from this result.  This requires only minor modifications of the above proof, which we highlight below.

\begin{proof}[Proof of Proposition \ref{prop:graphcount}]
Note that $f(G,t)$ counts precisely the number of stars of $t$ edges in the graph $G$ (except when $t=0$, when the empty graph is counted $n$ times, and $t=1$, in which case the single edges are counted twice).  When $t \neq 3$, these stars are the only intersecting graphs of $t$ edges, and thus Proposition \ref{prop:graphcount} follows directly from Proposition \ref{prop:trivint}.

\medskip

When $t=3$, we must augment the proof of Proposition \ref{prop:trivint} to also account for the triangles in the graph.  However, the number of possible triangles is a lower order term that can be taken care of by slightly altering the argument.

We begin by observing that the base case of the inductive argument still holds.  The proposition holds for $m \le n-1$, as every $3$-edge subgraph of $\cL_{n,2}(m)$ is intersecting, which is clearly the best possible.  Moreover, suppose $G$ contains a full star, and let $G'$ denote the subgraph with the full star removed.  Then each edge in $G'$ induces one triangle with edges from the full star.  Thus we can again write the number of intersecting subgraphs of $3$ edges as a constant term, independent of the structure of $G'$, plus the corresponding terms from $G'$, and can then apply the inductive hypothesis.

We next need a lower bound on the maximum degree $\Delta$.  Note that an edge $\{i,j\}$ can be involved in at most $\min \{d_i-1,d_j-1\}$ triangles, and thus in at most $\binom{d_i-1}{2} + \binom{d_j-1}{2} + \min \{d_i-1,d_j-1\}$ intersecting subgraphs of three edges in total.  Hence we have
\begin{align*}
	3 \inter(G,3) &\le \sum_{\{i,j\} \in E(G)} \left( \binom{d_i-1}{2} + \binom{d_j-1}{2} + \min \{ d_i-1,d_j-1 \} \right) \\
	&\le 2m \left( \binom{\Delta-1}{2} + \Delta - 1 \right) = 2m \binom{\Delta}{2}.
\end{align*}
On the other hand, we have $\inter(G,3) \ge \inter(\cL_{n,2}(m)) \ge r \binom{n-1}{3}$.  From these inequalities, we can deduce $\Delta (\Delta - 1) \ge \frac14(n - 2)(n-3)$.  Again, assume that $1$ is a vertex of maximum degree.

Now if we have the edge $e = \{i,j\}$ with $d_i \ge d_j$, then $e$ is contained in at most $\binom{d_i-1}{2} + \binom{d_j-1}{2} + d_j-1 \le \binom{d_i-1}{2} + \binom{d_j}{2} \le \binom{d_i-1}{2} + \binom{d_i}{2}$ intersecting families of three edges.  Since replacing $e$ with an edge containing $1$ would create at least $\binom{\Delta}{2}$ new stars of three edges, we must have $\binom{d_i-1}{2} + \binom{d_i}{2} > \binom{\Delta}{2}$, which, given our above bound on $\Delta$, shows $X = \{i : d_i \ge \frac{1}{2\sqrt{2}} n \}$ is a cover for $G$.  In fact, these shifting arguments also show that $X$ must be a clique.  As before, we can also show that if $v$ is not adjacent to $1$, then $v$ must in fact be an isolated vertex.

To complete the argument, we show that graphs with isolated vertices cannot be optimal by shifting $s-1$ edges to an isolated vertex, where $|X| = s \ge 2$.  In the proof of Proposition \ref{prop:trivint}, we saw that such a shift results in a gain of at least $\binom{s}{2} \left( \binom{s+1}{t-2} - \binom{s-2}{t-2} \right) = 3 \binom{s}{2}$ stars of three edges.  On the other hand, as $V(G) \setminus X$ is an independent set, we lose at most $(s-1)^2$ triangles, since every edge removed can only form a triangle with another vertex from $X$.  However, by adding $s-1$ edges from a clique to a new vertex, we create $\binom{s-1}{2}$ new triangles.  Hence we incur a net loss of at most $\binom{s}{2}$ triangles.  For $s \ge 3$ we have $3 \binom{s-1}{2} \ge \binom{s}{2}$, and so shifting the edges increases the maximum degree without decreasing $\inter(G,3)$, contradicting our choice of $G$.  If $s = 2$, then we are shifting one edge from the vertex of second-highest degree, say $2$, to the vertex of maximum degree.  By performing the preceding calculations more carefully, we find that we gain at least $d_2 - 2 > 0$ intersecting subgraphs of three edges, again contradicting the optimality of $G$.
\end{proof}

This completes the proof of Theorem \ref{thm:graphs}, showing that the initial segment of the lexicographic order is the most probably intersecting graph up to moderate densities.  Note that, as in all previously obtained results in \cite{katkatkat12} and \cite{russwalt13}, these graphs actually simultaneously maximise the number of intersecting subgraphs of all sizes, and hence the most probably intersecting graphs do not depend on $p$.  This phenomenon fails to hold for denser graphs, but we defer this discussion until Section \ref{sec:conclusion}.

\medskip

We conclude with some remarks on the uniqueness of the extremal graphs.  To have equality, we must in particular have equality in \eqref{ineq:graphslack}, namely that $\binom{d_v}{t-1} - \binom{d_v - j}{t-1} = j \binom{d_v-j}{t-1}$ for all $1 \le j \le s-1$.  There are only three possible cases: $t \le 2$, $t \ge d_v + 2$ or $s = 2$.  In the first case, if $t = 0$ or $t = 1$ it is trivial that there is no uniqueness, as any graph with $m$ edges will be extremal.  When $t = 2$, this reduces to the question of uniqueness in Theorem \ref{thm:ahlswedekatona}.  In this case, the extremal graphs are completely characterised by \'Abrego et al \cite{abrego09}, where it is shown that they are closely related to $\cL_{n,2}(m)$.

If $s = 2$ and $t \le d_v$, it is easy to see that shifting an edge to the vertex of highest degree increases the number of intersecting subgraphs.  For $t \ge d_v$ (and $t \le \binom{n-1}{k-1}$), the edges meeting the cover $X$ only at $v$ are not contained in any intersecting subgraphs of size $t$, and hence we may remove them to complete a star and increase the number of intersecting subgraphs.  Thus in these cases it follows that the extremal graph must contain $r$ full stars, and $\cL_{n,2}(m)$ is uniquely extremal if the number of additional edges is at least $t - r$.

\section{Intersecting hypergraphs} \label{sec:hypergraphs}

We now seek to extend these results to the hypergraph setting and prove Theorem~\ref{thm:structure}, restated below.

\setcounter{section}{1}
\setcounter{THM}{3}

\begin{THM}
Let $k, \ell$ and $t \ge 2$ be integers, and suppose $n \ge n_0(k, \ell)$ and $\binom{n}{k} - \binom{n-\ell}{k} \le m \le \binom{n}{k} - \binom{n - \ell - 1}{k}$.  If $\cF$ is a $k$-uniform set family on $[n]$ of size $m$ maximising the number of intersecting subfamilies of size $t$, then either
\begin{itemize}
	\item[(i)] $\cF$ contains $\ell$ full stars, or
	\item[(ii)] $\cF$ consists of $\ell + 1$ almost-full stars.
\end{itemize}
\end{THM}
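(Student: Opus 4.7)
The plan is to adapt the shifting-and-induction strategy of Proposition~\ref{prop:trivint} to the hypergraph setting. The crucial preliminary observation is that, for $n \ge n_0(k,\ell)$, the number of intersecting $t$-subfamilies of $\cF$ is dominated by trivially intersecting ones. By the Hilton-Milner theorem, any non-trivially intersecting $k$-uniform subfamily of $\cF$ has size at most $\binom{n-1}{k-1} - \binom{n-k-1}{k-1} + 1 = O_k(n^{k-2})$, whereas the number of $t$-stars in the lex candidate is $\Theta_{k,\ell,t}(n^{t(k-1)})$. Exploiting this gap, the count of non-trivially intersecting $t$-subfamilies of any competitive $\cF$ forms a lower-order term, so an extremal $\cF$ must essentially maximise $f(\cF,t) = \sum_{i \in [n]} \binom{d_i}{t}$, the count (with multiplicity) of trivially intersecting $t$-subfamilies of $\cF$.

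With this reduction in hand, the argument proceeds by induction on $(m,t)$. If $\cF$ contains a full star at some vertex $v$, we peel it off: writing $\tilde{\cF} = \cF \setminus \mathrm{star}(v)$ on the ground set $[n]\setminus\{v\}$, we obtain an expression for $\inter(\cF,t)$ in terms of $\inter(\tilde{\cF},t')$ for $t' \le t$ plus a quantity independent of $\tilde{\cF}$, so induction reduces the problem to $\tilde{\cF}$ and places $\cF$ into case~(i).

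When $\cF$ contains no full star, we select an extremal $\cF$ of maximum possible top degree $\Delta$. The double-counting identity $t \cdot f(\cF,t) = \sum_{F \in \cF}\sum_{i \in F}\binom{d_i-1}{t-1} \le mk\binom{\Delta-1}{t-1}$, together with the lower bound $f(\cF,t) \ge f(\cL_{n,k}(m),t)$ coming from extremality, forces $\Delta = (1-o(1))\binom{n-1}{k-1}$. A second shifting step then shows that every set in $\cF$ must meet a small cover $X \subset [n]$, each of whose vertices has degree $(1-o(1))\binom{n-1}{k-1}$: any set avoiding $X$ could be exchanged for a new $k$-set through a maximum-degree vertex, strictly increasing $f$ and hence $\inter(\cdot,t)$. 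The constraint $\binom{n}{k} - \binom{n-\ell}{k} \le m \le \binom{n}{k} - \binom{n-\ell-1}{k}$ then pins $|X| \in \{\ell, \ell+1\}$; since no full star exists, $|X| = \ell+1$, yielding case~(ii).

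The main obstacle lies in the shifting step of the no-full-star case: each shift in a hypergraph involves a $k$-set and simultaneously affects many $t$-subfamilies, so the convexity/telescoping manipulation behind inequality~\eqref{ineq:graphslack} must be executed carefully. One has to verify that the dominant gain in $f(\cF,t)$ overcomes both the mild overcounting from $t$-subfamilies sharing several common elements and the lower-order fluctuation in the non-trivial intersecting count. The analogous calculation transfers with $\binom{n-1}{k-1}$ playing the role of $n-1$ and an appropriately chosen cover threshold, though the book-keeping behind the residual partial-star versus almost-full-star dichotomy is where the bulk of the $n \ge n_0(k,\ell)$ assumption is consumed.
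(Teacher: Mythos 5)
The high-level outline---first reduce to counting stars, then use shifting to force a small cover of high-degree vertices---is the right kind of argument, and it is indeed the paper's strategy. But the proposal contains several gaps that hide most of the real work.

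First, the bound $\Delta = (1-o(1))\binom{n-1}{k-1}$ does not follow from the double-counting identity $t\,f(\cF,t) \le mk\binom{\Delta-1}{t-1}$ together with the extremality lower bound. For small fixed $t$, that calculation only yields $\binom{\Delta-1}{t-1} \ge \frac{\ell - o(1)}{k(\ell+1)}\binom{\binom{n-1}{k-1}-1}{t-1}$, hence $\Delta \ge c\binom{n-1}{k-1}$ for a constant $c < 1$ that degrades as $t$ decreases (roughly $c \approx (\tfrac{1}{k+1})^{1/(t-1)}$). You assert a $(1-o(1))$-factor, and the rest of your argument---in particular the claim that every cover vertex also has degree $(1-o(1))\binom{n-1}{k-1}$---relies on it without justification. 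The paper first establishes only a \emph{constant-fraction} degree for a cover $X$, then must work considerably harder: it shows by shifting that all non-full-degree cover vertices have almost equal degree, parametrizes $m = (\ell+\alpha-o(1))\binom{n-1}{k-1}$, and uses the convexity estimate of Lemma~\ref{lem:binomial}(iii) to show that if these common degrees were bounded away from $\binom{n-1}{k-1}$, then a further shift would strictly increase $\inter(\cF,t)$. The conclusion $|X| = \ell+1$ falls out of that calculation, not from the constraint on $m$ alone. Without this chain of reductions the ``pinning'' step is circular.

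Second, the induction step for the full-star case is underpowered. In the graph setting the paper can peel off a full star cleanly because the only intersecting structures are stars and triangles, and the contribution from triangles after removal of the star is a constant. In the hypergraph setting, the decomposition of $\inter(\cF,t)$ over subfamilies split between the full star and the residual family $\tilde\cF$ is genuinely messy: there are many isomorphism types of intersecting families, and the number of extensions of a residual subfamily to an intersecting $t$-family depends on the structure of $\tilde\cF$, not just on $\inter(\tilde\cF,t')$. The paper avoids this entirely---it does not induct on $m$, but handles the $p < \ell$ full stars inline while reasoning directly about the degree sequence.

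Third, your dominance argument for non-trivially intersecting subfamilies is only stated for fixed $t$ (comparing $O_k(n^{k-2})$ to $\Theta(n^{t(k-1)})$), but the theorem must hold for $t$ as large as $\binom{n-1}{k-1}$. The paper's Lemma~\ref{lem:nontrivial} is specifically designed to handle this uniformly in $t$, via a decomposition of a non-trivial family into its largest star plus a matching; Hilton--Milner alone does not give the required uniform bound, since the many non-trivially intersecting ``shells'' overlap in complicated ways.
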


\setcounter{section}{3}
\setcounter{THM}{0}

In contrast to the graph case, there is a rich variety of non-isomorphic intersecting structures we shall have to account for.  We call intersecting families that are not stars \emph{non-trivially intersecting}.  Despite the wide range of non-trivially intersecting families, these are very small families when $k = o(\sqrt{n})$, as the Hilton-Milner theorem \cite{hilmil67} shows that the largest non-trivially intersecting family has size $\binom{n-1}{k-1} - \binom{n-k-1}{k-1} + 1 = o\left( \binom{n-1}{k-1} \right)$.  It remains the case that most intersecting subfamilies are stars, as we show in the following lemma.

\begin{LEMMA} \label{lem:nontrivial}
For $F \in \cF$, the number of non-trivially intersecting families of size $t$ in $\cF$ containing $F$ is $O \left( n^{-t/4k} \binom{ \binom{n-1}{k-1}}{t-1} \right)$, and the total number of such families in $\cF$ is $O \left( n^{-t/4k} \binom{\binom{n-1}{k-1}}{t} \right)$.
\end{LEMMA}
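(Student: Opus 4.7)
The plan is to fix $F \in \cF$ and bound the first quantity (the count of non-trivially intersecting $t$-subfamilies of $\cF$ containing $F$); the second bound then follows by summing over $F$, dividing by $t$, and using $|\cF| \le (\ell+1) N$ together with $\binom{N}{t-1}/t = O(\binom{N}{t}/N)$, where $N := \binom{n-1}{k-1}$. Write such a family as $\cG = \{F\} \cup \cH$. Since $\cG$ is intersecting, $\cH$ is contained in $\cA := \{G \in \cF : G \cap F \neq \emptyset\}$, and since $\cG$ is non-trivially intersecting with $F \in \cG$, the intersection $F \cap \bigcap \cH$ is empty.

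The main step is a case analysis on $\cH$. First, suppose $\cH$ is trivially intersecting with common element $y$. Then $y \notin F$ (else $y \in \bigcap \cG$), so $\cH \subseteq \cA_y := \{G \in \cA : y \in G\}$. For $y \notin F$, any set in $\cA_y$ must contain $y$ together with at least one element of $F$, so $|\cA_y| \le k\binom{n-2}{k-2} = \frac{k(k-1)}{n-1} N$. Using the elementary inequality $\binom{cN}{s} \le c^s \binom{N}{s}$ for $c \le 1$ and summing over the $n-k$ choices of $y$ yields a bound of order $n\bigl(\tfrac{k^{2}}{n}\bigr)^{t-1}\binom{N}{t-1}$, which is $O\!\bigl(n^{-t/(4k)}\binom{N}{t-1}\bigr)$ for $t \ge 3$ and $n \ge n_0(k,\ell)$, since the exponent $t-2$ on $n^{-1}$ exceeds $t/(4k)$ throughout this range.

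Next, suppose $\cH$ is itself non-trivially intersecting. Hilton--Milner then forces $t - 1 \le H := \binom{n-1}{k-1} - \binom{n-k-1}{k-1} + 1 = O(k^{2} N/n)$; in particular the subcase $t = 3$ is vacuous because a pair of intersecting sets is automatically trivially intersecting. For $t \ge 4$ I handle this by an iterative peel: choose $H^* \in \cH$ and examine $\cH \setminus \{H^*\}$. Either this $(t-2)$-subfamily admits a common element $y \in [n] \setminus F$, in which case $\cH$ is HM-like and is bounded by the product of the number of choices of $y$ (at most $n$), of the exception $H^* \in \cA$ (restricted by the requirement to intersect every member of the $y$-star), and of the $(t-2)$-subset of $\cA_y$ forming the star, the last of which contributes the key $(k^{2}/n)^{t-2}\binom{N}{t-2}$ saving; or else $\cH \setminus \{H^*\}$ is itself non-trivially intersecting, at which point I iterate, noting that Hilton--Milner now applies to the smaller family as well.

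The main obstacle is controlling this iterated peel: each round contributes a combinatorial cost (choosing a peel vertex and an exception set, roughly $n \cdot |\cA|$) which must be dominated by the exponential saving $(k^{2}/n)^{r}$ coming from the trivially intersecting star of size $r$ at the bottom of the recursion. Since each peel reduces the remaining size by one and contributes a factor of $O(k^{2}/n)$, after $\Omega(t)$ rounds the accumulated saving is $(k^{2}/n)^{\Omega(t)}$, which absorbs into $n^{-t/(4k)}$ once $n \ge n_0(k,\ell)$. Combining the trivial and non-trivial cases gives the first bound, and the second bound follows from the summation argument described at the outset.
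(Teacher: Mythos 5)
Your approach genuinely differs from the paper's: you fix $F$ and peel exception sets one at a time, while the paper counts globally over $\binom{[n]}{k}$ by canonically decomposing each non-trivially intersecting $\cH$ as $\cH = \cH_0 \cup \cH_1$ with $\cH_0$ the \emph{largest} star, and then uses a maximal matching inside $\cH_0$ to constrain $\cH_1$. Your handling of the trivially-intersecting case for $\cH$ is correct and matches the spirit of the paper, but the iterated peel for non-trivially intersecting $\cH$ has a genuine gap. The central unjustified claim is that ``each peel \ldots contributes a factor of $O(k^2/n)$.'' Peeling an exception set $H^*$ costs roughly $|\cA| \approx k\binom{n-1}{k-1}$ choices; no per-peel saving of order $k^2/n$ is established, and in general none exists (for instance, if the sets in the residual star all share a second element $z$, any exception containing $z$ automatically intersects the star, so the intersecting constraint contributes nothing). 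The only saving you have actually exhibited comes from the star of size $r$ at the bottom, of order $(k^2/n)^r$. Since you never bound $r$ from below, the recursion can terminate at $r=2$, in which case the $q=t-3$ exception sets contribute roughly $k^{t-3}\binom{N}{t-3}$ while the star contributes only $(k^2/n)^2\binom{N}{2}$, giving a total of order $n^{-1}\cdot (Ck)^t\binom{N}{t-1}$ for a constant $C$. The ratio of this to the target $n^{-t/4k}\binom{N}{t-1}$ is $(Ck)^t n^{t/4k - 1}$, which tends to infinity with $t$ for any fixed $n$, so the bound fails with a cutoff $n_0(k,\ell)$ independent of $t$ (as the lemma requires).

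The paper avoids exactly this problem through two additional ingredients your argument lacks. First, a pigeonhole step: for any $\cH$, some element of any fixed $H\in\cH$ lies in at least a $\frac{1}{k}$-fraction of the other sets, so the \emph{largest} star $\cH_0\subseteq\cH$ has size $t_0 \ge t/k$; this guarantees an $\Omega(t)$-sized star and hence $(k^2/n)^{\Omega(t)}$ savings from the star alone. Second, a matching argument: taking a maximal matching $\cM$ of size $b$ in $\{G\setminus S : G\in\cH_0\}$ (where $S$ is the centre), each set of $\cH_1$ must avoid $S$ and hit each of the $b$ pairwise-disjoint sets in $\cM$, giving a factor of order $(k^{2b-1}/n^{b-1})$ per exception set; simultaneously, the remaining $t_0-b$ star sets must each meet the $(k-s)b$ elements covered by $\cM$, giving further savings. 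Without these two structural facts the per-exception cost is not controlled, and there is also a double-counting issue in your argument since the sequence of peeled sets $H^*$ is not canonically determined by $\cH$. To repair your proof along the same lines you would need to (i) peel all of $\cH_1$ at once, defined as the complement of the largest star, (ii) establish $|\cH_0|\ge t/k$, and (iii) constrain the peeled sets via a matching in $\cH_0$ --- at which point you have essentially reproduced the paper's argument.
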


While the bounds required on $n$ can be explicitly calculated, we have chosen to simplify the presentation through the use of asymptotic notation, where we fix $k$ and $\ell$ and let $n$ tend to infinity.  Note, however, that we make no assumption on the relative magnitudes of $n$ and $t$; $t$ may be as large as $\binom{n-1}{k-1}$.

The proof of Lemma \ref{lem:nontrivial} is slightly technical, and so we defer it until the end of this section.  However, throughout this section we shall require some estimates on binomial coefficients, which we collect below.

\begin{LEMMA} \label{lem:binomial}
Suppose we have integers $0 \le a \le b \le c$ and $0 < M \le S$.  Then
\begin{itemize}
	\item[(i)] $\binom{b}{r} \le \left( \frac{b}{c} \right)^r \binom{c}{r}$,
	\item[(ii)] for $r \ge 1$, if $\sum_i n_i = S$ and $0 \le n_i \le M$ for all $i$, then $\sum_i \binom{n_i}{r} \le \frac{S}{M} \binom{M}{r}$, and
	\item[(iii)] for $r \ge 2$, $\left[ \binom{b-a}{r} + \binom{c+a}{r} \right] - \left[ \binom{b}{r} + \binom{c}{r} \right] \ge \left(1 - \frac{b-a}{c} \right) \frac{ar}{c - r+1} \binom{c}{r}$.
\end{itemize}
\end{LEMMA}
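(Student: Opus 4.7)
The plan is to handle the three parts separately using elementary binomial-coefficient manipulations.

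For part (i), I will write $\binom{b}{r}/\binom{c}{r} = \prod_{i=0}^{r-1}(b-i)/(c-i)$ and observe that $(b-i)/(c-i) \le b/c$ for each $0 \le i \le r-1$, since the inequality $c(b-i) \le b(c-i)$ reduces to $bi \le ci$, which holds by hypothesis. Multiplying the $r$ factors yields $(b/c)^r$ times $\binom{c}{r}$, as required.

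For part (ii), I plan to use the identity $\binom{n}{r}/n = \binom{n-1}{r-1}/r$, which is nondecreasing in $n \ge 1$. Together with $n_i \le M$, this gives $\binom{n_i}{r} \le (n_i/M)\binom{M}{r}$ for each $i$ (trivially when $n_i = 0$), and summing over $i$ produces the claimed bound $\sum_i \binom{n_i}{r} \le (S/M)\binom{M}{r}$.

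The real content is in (iii). First I would simplify the right-hand side using $\frac{r}{c-r+1}\binom{c}{r} = \binom{c}{r-1}$, reducing the target to $a(1-(b-a)/c)\binom{c}{r-1}$. For the left-hand side, set $g(x) = \binom{x+a}{r} - \binom{x}{r}$ so the expression equals $g(c) - g(b-a)$. Telescoping with Pascal's identity gives $g(c) - g(b-a) = \sum_{x=b-a}^{c-1}\bigl[\binom{x+a}{r-1} - \binom{x}{r-1}\bigr]$, and applying Pascal once more expands $\binom{x+a}{r-1} - \binom{x}{r-1} = \sum_{j=0}^{a-1}\binom{x+j}{r-2} \ge a\binom{x}{r-2}$ (using $r \ge 2$). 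The hockey-stick identity then collapses the outer sum to $a\bigl[\binom{c}{r-1} - \binom{b-a}{r-1}\bigr]$, and part (i) yields $\binom{b-a}{r-1} \le ((b-a)/c)^{r-1}\binom{c}{r-1} \le ((b-a)/c)\binom{c}{r-1}$, where the last step uses $r-1 \ge 1$ and $(b-a)/c \le 1$. Combining the inequalities gives exactly the target.

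The potential obstacle is that the crude estimates $\binom{x+j}{r-2} \ge \binom{x}{r-2}$ and $((b-a)/c)^{r-1} \le (b-a)/c$ might appear lossy; but in fact the double telescoping produces exactly the multiplicative factor $a$, and part (i) contributes exactly the factor $(b-a)/c$, so the chain lines up with the desired bound without slack. One should double-check the hockey-stick application in the edge case where $b - a < r - 1$ (in which the corresponding term is simply zero), but this causes no difficulty.
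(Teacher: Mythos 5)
Your proof is correct in all three parts.

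Part (i) is argued exactly as in the paper. Part (ii) departs from the paper's route: the paper uses a smoothing argument (if $0 < n_j \le n_i < M$, replacing $(n_j, n_i)$ by $(n_j - 1, n_i + 1)$ does not decrease $\sum_i \binom{n_i}{r}$, so mass can be pushed to $0$ and $M$ leaving at most one intermediate value, which is then handled via part (i)), whereas you observe directly that $\binom{n}{r}/n = \binom{n-1}{r-1}/r$ is nondecreasing in $n$, giving the termwise bound $\binom{n_i}{r} \le (n_i/M)\binom{M}{r}$ and summing. Your version is more direct and slightly shorter. For part (iii), both proofs reduce to a telescope, but along different axes: the paper telescopes over the shift amount $j$ from $1$ to $a$, obtaining $\sum_{j=1}^a \bigl[\binom{c+j-1}{r-1}-\binom{b-a+j-1}{r-1}\bigr]$, and then lower-bounds each summand by $\bigl(1-\tfrac{b-a}{c}\bigr)\binom{c}{r-1}$ via part (i); you instead telescope over the argument $x$ from $b-a$ to $c-1$, open each difference $\binom{x+a}{r-1}-\binom{x}{r-1}$ as a secondary telescope, crudely lower-bound to $a\binom{x}{r-2}$, collapse via the hockey-stick identity to $a\bigl[\binom{c}{r-1}-\binom{b-a}{r-1}\bigr]$, and finish with part (i). Both routes land on the stated bound with no slack; the paper's single-telescope is marginally more economical, while your double-telescope makes the origin of the multiplicative factor $a$ particularly transparent. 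Your handling of the edge case $b-a < r-1$ is also fine, since the relevant binomial coefficient vanishes.
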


\begin{proof}[Proof of Lemma \ref{lem:binomial}]
\begin{itemize}
\item[(i)]  By definition, we have
\[ \binom{b}{r} = \frac{1}{r!} \prod_{j=0}^{r-1} (b - j) \le \frac{1}{r!} \prod_{j=0}^{r-1} \frac{b}{c} (c - j) = \left( \frac{b}{c} \right)^r \binom{c}{r}. \]

\item[(ii)]  Suppose we had $i$ and $j$ such that $0 < n_j \le n_i < M$.  Fixing the other variables, we have
\begin{align*}
	\binom{n_j-1}{r} + \binom{n_i + 1}{r} &= \binom{n_j}{r} - \binom{n_j-1}{r-1} + \binom{n_i}{r} + \binom{n_i}{r-1} \\
	&= \binom{n_j}{r} + \binom{n_i}{r} + \binom{n_i}{r-1} - \binom{n_j-1}{r-1} \ge \binom{n_j}{r} + \binom{n_i}{r}.
\end{align*}
This shows we may assume there is at most one $i$ for which $0 < n_i < M$.  Since $\sum_i n_i = S$, this implies we have $m = \left \lfloor \frac{S}{M} \right \rfloor$ variables $n_j = M$, with one variable equal to $S - mM$.  Hence, using (i),
\begin{align*}
	\sum_i \binom{n_i}{r} &\le m \binom{M}{r} + \binom{S - mM}{r} \le m \binom{M}{r} + \left( \frac{S - mM}{M} \right)^r \binom{M}{r} \\
	&\le \left( m + \frac{S - mM}{M} \right) \binom{M}{r} = \frac{S}{M} \binom{M}{r}.
\end{align*}

\item[(iii)] We rearrange and telescope the sums
\begin{align*}
	&\left[ \binom{b-a}{r} + \binom{c+a}{r} \right] - \left[ \binom{b}{r} + \binom{c}{r} \right] = \left[ \binom{c+a}{r} - \binom{c}{r} \right] - \left[ \binom{b}{r} - \binom{b-a}{r} \right] \\
	&= \sum_{j=1}^a \left( \left[ \binom{c+j}{r} - \binom{c+j-1}{r} \right] - \left[ \binom{b-a+j}{r} - \binom{b-a+j-1}{r} \right] \right) \\
	&= \sum_{j=1}^a \left[ \binom{c+j-1}{r-1} - \binom{b-a+j-1}{r-1} \right].
\end{align*}
Using (i), we can estimate these differences
\begin{align*}
	\binom{c+j-1}{r-1} - \binom{b-a+j-1}{r-1} &\ge \binom{c+j-1}{r-1} - \left(\frac{b-a+j-1}{c+j-1} \right)^{r-1} \binom{c+j-1}{r-1} \\
	&\ge \left( 1 - \frac{b-a+j-1}{c+j-1} \right) \binom{c+j-1}{r-1} \ge \left(1 - \frac{b-a}{c} \right) \binom{c}{r-1}.
\end{align*}
Thus we have
\[ \left[ \binom{b-a}{r} + \binom{c+a}{r} \right] - \left[ \binom{b}{r} + \binom{c}{r} \right] \ge \left(1 - \frac{b-a}{c} \right) \sum_{j=1}^a \binom{c}{r-1} =  \left(1 - \frac{b-a}{c} \right) \frac{ar}{c-r+1} \binom{c}{r}. \]
\end{itemize}
\end{proof}

Armed with these lemmas, we may now proceed to deduce our counting result.  In particular, Lemma \ref{lem:nontrivial} implies that when counting intersecting subfamilies of size $t$, the non-trivially intersecting families are a lower order term, and so we may focus on the number of stars with $t$ edges.  Applying similar shifting arguments to those in Section \ref{sec:graphs}, we shall deduce the rough structural characterisation of optimal families given in Theorem~\ref{thm:structure}.

Before we begin to prove Theorem~\ref{thm:structure}, we first analyse the initial segment of the lexicographic order to obtain a lower bound on the number of intersecting subfamilies in an optimal family.  Note that, for $m$ in the above range, $\cL_{n,k}(m)$ consists of all sets intersecting $[\ell]$, with $m - \binom{n}{k} + \binom{n-\ell}{k}$ additional sets all containing $\ell + 1$.  Hence $\cL_{n,k}(m)$ falls under case (i) of the theorem.

When counting the intersecting subfamilies of size $t$ in $\cL_{n,k}(m)$, we consider only the stars with centre $i$ for some $1 \le i \le \ell$.  There are $\ell$ choices for the centre of the star, and then for each star we must choose $t$ of the $\binom{n-1}{k-1}$ possible sets.  A star is overcounted only if all its sets contain at least two elements from $[\ell]$, giving at most $\binom{n-2}{k-2}$ sets for each choice of elements from $[\ell]$.  By the Bonferroni Inequalities and Lemma \ref{lem:binomial}, we have
\[ \inter(\cL_{n,k}(m),t) \ge \ell \binom{ \binom{n-1}{k-1}}{t} - \binom{\ell}{2} \binom{ \binom{n-2}{k-2} }{t} \ge \left[ \ell - \binom{\ell}{2} \left( \frac{k-1}{n-1} \right)^t \right] \binom{\binom{n-1}{k-1}}{t} = \left( \ell - o(1) \right) \binom{ \binom{n-1}{k-1} }{t}. \]

This gives us a lower bound on $\inter(\cF,t)$ for any optimal family $\cF$.  We now proceed with the proof of Theorem~\ref{thm:structure}.

\begin{proof}[Proof of Theorem~\ref{thm:structure}]
Suppose $\cF$ is optimal for the given parameters.  Note that we may assume $\ell \ge 1$, as (i) is trivially satisfied for $\ell = 0$.

Let $d_i$ denote the degree of vertex $i$.  Our goal is to show that either $d_i = \binom{n-1}{k-1}$ for $\ell$ vertices $i$, or $d_i = \left(1 - o(1) \right) \binom{n-1}{k-1}$ for $\ell + 1$ vertices that cover $\cF$.  Suppose $\cF$ has $p$ full stars, which we may assume have centres $1 \le i \le p$.  If $p = \ell$ we are done, so assume $p \le \ell - 1$.

Note that for $i > p$, none of the vertices have full degree, and so we may replace any set in $\cF$ with a set containing $i$.  In order to fully utilise this shifting, we will first show there is a vertex of relatively large degree.  From this, we shall deduce the existence of a small set of vertices covering all the edges.  Finally, we shall shift sets in this small cover to obtain the desired result.

To begin, note that by optimality we must have
\[ \inter(\cF,t) \ge \inter(\cL_{n,k}(m),t) \ge \left( \ell - o(1) \right) \binom{\binom{n-1}{k-1}}{t}. \]

By Lemma \ref{lem:nontrivial}, it follows that almost all of these intersecting subfamilies should be stars.  Let $d_i$ denote the degree of vertex $i$.  Then, counting over the centres of the stars, we have
\[ \inter(\cF,t) \le \sum_i \binom{d_i}{t} + o \left( \binom{\binom{n-1}{k-1}}{t} \right) = \left( p + o(1) \right) \binom{\binom{n-1}{k-1}}{t} + \sum_{i > p} \binom{d_i}{t}, \]
and so
\[ \sum_{i > p} \binom{d_i}{t} \ge \left( \ell - p - o(1) \right) \binom{\binom{n-1}{k-1}}{t}. \]

Note that by double-counting the edges, we have $\sum_{i > p} d_i \le \sum_i d_i = km \le k (\ell+1) \binom{n-1}{k-1}$.  Suppose we had $d_i \le M = c\binom{n-1}{k-1}$ for all $i > p$.  By Lemma \ref{lem:binomial}, we have
\[ \sum_{i > p} \binom{d_i}{t} \le \frac{k (\ell + 1) \binom{n-1}{k-1}}{M} \binom{M}{t} \le k (\ell+1) c^{t-1} \binom{\binom{n-1}{k-1}}{t}. \]

Comparing this to the lower bound, we must have $k(\ell+1)c^{t-1} \ge \ell - p - o(1)$, which implies $c = \Omega(1)$.  Hence we have some vertex, which we may assume to be $i=p+1$, with $d_{p+1} \ge c \binom{n-1}{k-1}$.

We shall now show that there is a small cover of vertices of large degree.  Let $X = \left\{ i : d_i \ge \frac{c}{k} \binom{n-1}{k-1} \right\}$, and suppose for contradiction we have $F \in \cF$ with $F \cap X = \emptyset$.

We have $\{ G \in \cF: G \cap F \neq \emptyset \} = \cup_{i \in F} \{ G \in \cF : i \in G \}$, and so, since $F \cap X = \emptyset$, there are at most $\sum_{i \in F} d_i < c\binom{n-1}{k-1}$ sets in $\cF$ intersecting $F$.  Since any intersecting subfamily of size $t$ containing $F$ must consist only of sets intersecting $F$, there are fewer than $\binom{c \binom{n-1}{k-1}}{t-1}$ such subfamilies.

On the other hand, replacing $F$ with a set containing $p+1$ creates at least $\binom{d_{p+1}}{t-1} \ge \binom{c\binom{n-1}{k-1}}{t-1}$ stars of size $t$ in $\cF$.  This shift would thus increase the number of intersecting subfamilies of size $t$ in $\cF$, contradicting the optimality of $\cF$.  Hence we must have $F \cap X \neq \emptyset$ for all $F \in \cF$; that is, $X$ covers $\cF$.

We now show this cover is small.  Indeed, we have
\[ k(\ell + 1)\binom{n-1}{k-1} \ge km = \sum_i d_i \ge \sum_{i \in X} d_i \ge \frac{c}{k} \binom{n-1}{k-1} \card{X}, \]
and so $\card{X} \le \frac{k^2(\ell+1)}{c} = O(1)$, as desired.

Now take a minimal subcover in $X$, which we may assume to be $[r]$.  Thus $r \le \card{X} = O(1)$.  Since $m \ge \binom{n}{k} - \binom{n-\ell}{k}$, we must have $r \ge \ell + 1$ (we cannot have $r = \ell$, as we have assumed $\cF$ only has $p < \ell$ full stars).  Note that every vertex in $[r]$ has degree at least $\frac{c}{k} \binom{n-1}{k-1}$.  Moreover, for any vertex $i \notin [r]$, all sets containing $i$ must also meet $[r]$, and so we have $d_i \le r \binom{n-2}{k-2}$.

We shall employ shifting arguments to show that all vertices in $[r]$ that are not of full degree should have approximately equal degrees.  Indeed, let $i$ and $j$ be two such vertices.  By the minimality of the cover, there must be some set $F$ with $F \cap [r] = i$.  From the preceding remarks, it follows that the number of sets intersecting $F$ is at most $\sum_{v \in F}  d_v \le d_i + r(k-1) \binom{n-2}{k-2}$.  Hence $F$ is in at most $\binom{d_i + r(k-1)\binom{n-2}{k-2}}{t-1}$ intersecting subfamilies of size $t$.

On the other hand, if we were to add a new set containing $j$, it would be in at least $\binom{d_j}{t-1}$ stars of $t$ sets containing $j$.  By optimality, it cannot be desirable to shift $F$ to a set containing $j$, and so we must have $\binom{d_i + r(k-1) \binom{n-2}{k-2}}{t-1} \ge \binom{d_j}{t-1}$, and hence $d_j \le d_i + r(k-1) \binom{n-2}{k-2} = d_i + o \left( \binom{n-1}{k-1} \right)$.  By symmetry, we have $d_j = d_i + o \left( \binom{n-1}{k-1} \right)$ for all such vertices $i,j$.

Let us now review what we have revealed of the structure of $\cF$.  There are $p$ vertices $[p]$ of degree $\binom{n-1}{k-1}$, and a further $r-p$ vertices $[r] \setminus [p]$ of almost-equal degree that cover the remaining edges.  Let $\alpha \in [0,1]$ be such that $m = \binom{n}{k} - \binom{n-\ell}{k} + \alpha \binom{n-1}{k-1} = \left( \ell + \alpha - o(1) \right) \binom{n-1}{k-1}$.  Since the first $p$ vertices cover $\left( p - o(1) \right) \binom{n-1}{k-1}$ edges, the degrees of the remaining $r-p$ vertices must be $\frac{\ell - p + \alpha + o(1)}{r-p} \binom{n-1}{k-1}$.  Let us assume they are listed in order of decreasing degrees, so $d_{p+1} \ge d_r$.

Suppose for some fixed $0 < \varepsilon < \frac{c}{k}$ we had $\frac{\ell - p + \alpha + o(1)}{r-p} < 1 - \varepsilon$.  Since $d_r \ge \frac{c}{k} \binom{n-1}{k-1}$, and there are $o \left( \binom{n-1}{k-1} \right)$ sets containing $r$ that also contain another element of $[r]$, we can find a set of $\varepsilon \binom{n-1}{k-1}$ edges that only meet $[r]$ at $r$.  We shall shift these edges to the vertex $p+1$.

By Lemma \ref{lem:nontrivial}, the number of non-trivially intersecting subfamilies of size $t$ created or destroyed is a lower-order term, while the degrees of vertices outside $[r]$ are so small that by Lemma \ref{lem:binomial} we may ignore the number of stars with centres outside $[r]$.  Hence the only intersecting subfamilies we need to consider are the stars with centres $p+1$ or $r$.

Before the shift, we had $\binom{d_{p+1}}{t} + \binom{d_r}{t}$ such stars, and after the shift, there are $\binom{d_{p+1} + \varepsilon \binom{n-1}{k-1}}{t} + \binom{d_r - \varepsilon \binom{n-1}{k-1}}{t}$ stars.  Applying Lemma \ref{lem:binomial}, we gain at least
\begin{equation} \label{ineq:shifting}
\left( 1 - \frac{d_r - \varepsilon \binom{n-1}{k-1}}{d_{p+1}} \right) \frac{\varepsilon t \binom{n-1}{k-1}}{d_{p+1} - t + 1} \binom{d_{p+1}}{t} > \varepsilon^2 \binom{d_{p+1}}{t} 
\end{equation}
stars.  This is strictly positive unless $t > d_{p+1} \ge \frac{c}{k} \binom{n-1}{k-1}$.  In this case, it follows by the Hilton-Milner theorem \cite{hilmil67} that the only intersecting families of size $t$ are stars.  Since no set meeting the cover $X$ only in $p+1$ is contained in a star of size $t$ (as $t > d_{p+1} \ge d_i$ for any vertex $i$ in such a set), we may shift sets containing $p+1$ to other vertices in the cover.  We can repeat this process until we obtain a full star, which will strictly increase the number of $t$-stars, contradicting the optimality of $\cF$.

The positivity of~\eqref{ineq:shifting} contradicts the optimality of $\cF$.  Hence we must have $\frac{\ell - p + \alpha + o(1)}{r-p} = 1 - o(1)$.  Since $r \ge \ell + 1$, this is only possible when $r = \ell + 1$ (and $\alpha = 1 - o(1)$), and so it follows that $\cF$ consists of $\ell + 1$ almost-full stars, and thus we are in case (ii).

This completes the proof of Theorem~\ref{thm:structure}.
\end{proof}

This result provides us with the approximate structure of the extremal families.  In particular, when $\alpha$ is not $1 - o(1)$, we know that any extremal family contains $\ell$ full stars, and hence is close to $\cL_{n,k}(m)$ in structure.  In order to show that $\cL_{n,k}(m)$ is in fact optimal, it remains to determine the structure of the sets outside the $\ell$ full stars.  In some special cases, we are able to do this exactly, as given by the following proposition.

\begin{PROP} \label{prop:exact}
Let $k, \ell$ and $t$ be integers, and suppose $n \ge n_0(k,\ell)$ and $\binom{n}{k} - \binom{n-\ell}{k} \le m \le \binom{n}{k} - \binom{n-\ell}{k} + n - \ell - k + 1$.  If $\cF$ is a $k$-uniform set family on $[n]$ with $m$ edges, then $\inter(\cF,t) \le \inter(\cL_{n,k}(m),t)$.
\end{PROP}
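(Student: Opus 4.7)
The plan is to combine Theorem~\ref{thm:structure} with a convexity analysis of the degree sequence to identify $\cL_{n,k}(m)$ as extremal.

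Write $\alpha = m - \binom{n}{k} + \binom{n-\ell}{k}$, so $0 \le \alpha \le n-\ell-k+1$. For $k=2$ the statement follows immediately from Proposition~\ref{prop:graphcount} applied with $\ell$ replaced by $\ell+1$, since $\binom{n}{2} - \binom{n-\ell}{2} + (n-\ell-1) = \binom{n}{2} - \binom{n-\ell-1}{2}$. For $k \ge 3$, the normalised excess $\alpha/\binom{n-1}{k-1} = O(n^{2-k}) = o(1)$ is strictly less than the $1-o(1)$ threshold for case~(ii) of Theorem~\ref{thm:structure}, so any optimal $\cF$ must fall in case~(i) and contain $\ell$ full stars. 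After relabelling, $\cF = \cF_0 \cup \mathcal{E}$ where $\cF_0 = \{F \in \binom{[n]}{k} : F \cap [\ell] \neq \emptyset\}$ and $\mathcal{E} \subseteq \binom{\{\ell+1,\ldots,n\}}{k}$ has size $\alpha$.

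By Lemma~\ref{lem:nontrivial} the non-trivially intersecting $t$-subfamilies of $\cF$ contribute only $o\!\left(\binom{\binom{n-1}{k-1}}{t}\right)$, so $\inter(\cF,t)$ is determined to leading order by the number of stars of size $t$. The stars with centre $v \in [\ell]$ contribute the $\mathcal{E}$-independent term $\ell\binom{\binom{n-1}{k-1}}{t}$, and the leading $\mathcal{E}$-dependent contribution is $\sum_{v \in \{\ell+1,\ldots,n\}} \binom{a+c_v(\mathcal{E})}{t}$, where $a = \binom{n-1}{k-1} - \binom{n-\ell-1}{k-1}$ and $c_v(\mathcal{E}) = |\{E \in \mathcal{E} : v \in E\}|$. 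Subject to $\sum_v c_v = k\alpha$ and $c_v \le \alpha$, the convexity inequality Lemma~\ref{lem:binomial}(iii) shows this sum is maximised when the distribution of $c_v$ is as concentrated as possible. Since $k$ distinct vertices with $c_v = \alpha$ would force the $\alpha$ distinct sets of $\mathcal{E}$ to coincide whenever $\alpha \ge 2$, at most $k-1$ of the $c_v$ can equal $\alpha$; this maximum is attained precisely when $\mathcal{E}$ is a ``star'' on a $(k-1)$-element core, matching the structure of $\mathcal{E}^{\mathrm{lex}} = \{A \cup \{j\} : j \in J\}$ with $A = \{\ell+1,\ldots,\ell+k-1\}$ and $|J| = \alpha$.

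The main obstacle is controlling the lower-order corrections: the inclusion-exclusion terms $\binom{d_S}{t}$ for $S \subseteq \{\ell+1,\ldots,n\}$ with $|S| \ge 2$ that arise when stars are counted by their full common intersection, together with the non-trivially intersecting contribution from Lemma~\ref{lem:nontrivial}. Each such term is $o\!\left(\binom{\binom{n-1}{k-1}}{t}\right)$ individually, but because the leading-order gap between $\mathcal{E}^{\mathrm{lex}}$ and other shapes of $\mathcal{E}$ can itself be small, the comparison must be made carefully. I would handle these by the same shifting-and-convexity template used in the proof of Theorem~\ref{thm:structure}, with Lemma~\ref{lem:binomial} controlling the redistributions. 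The fact that several configurations of $\mathcal{E}$ can achieve the same leading-order value for small $\alpha$ is consistent with the weak inequality in the proposition: $\mathcal{E}^{\mathrm{lex}}$ is extremal, but not always uniquely so.
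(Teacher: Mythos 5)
Your high-level strategy---invoke Theorem~\ref{thm:structure} to force $\ell$ full stars, then optimise the structure of the leftover family $\mathcal{E} = \cF_1$---matches the paper's, and the $k=2$ reduction to Proposition~\ref{prop:graphcount} and the ruling-out of case~(ii) for $k \ge 3$ are both fine. But the route you take for the core comparison is genuinely different from the paper's, and as written it has a real gap.

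The paper does not work with degree sums $\sum_v \binom{d_v}{t}$ at all. Instead it conditions exactly on $\cH = \cG \cap \cF_1$: for each $\cH \subseteq \cF_1$ with $|\cH| = h \ge 2$ it estimates the number $\mathrm{ext}(\cH)$ of ways to extend $\cH$ by $t-h$ sets of $\cF_0$ to an intersecting family, shows that up to lower-order terms this depends only on $h$ and on $a = |\cap_{H\in\cH} H|$, and that it is increasing in $a$. Since $a \le k-1$ always and the lex family achieves $a = k-1$ for every $\cH$, $\cL_{n,k}(m)$ is optimal term by term. This stratification is what makes the error control clean: the comparison is between $a$ and $a+1$ for a fixed $\cH$, a definite gap of order $\binom{n-2}{k-2}$ in the upper argument, which dominates the $o(\cdot)$ corrections.

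Your degree-sequence argument, by contrast, invokes Lemma~\ref{lem:binomial}(iii) to ``concentrate $c_v$ as much as possible,'' but this does not identify $\mathcal{E}^{\mathrm{lex}}$, because its degree sequence $(\alpha,\dots,\alpha,1,\dots,1)$ with $k-1$ copies of $\alpha$ does \emph{not} majorise every realisable degree sequence. Concretely, take $k=3$, $\alpha=3$ and $\mathcal{E} = \{\{A,B,C\},\{A,B,D\},\{A,C,D\}\}$: the degree sequence is $(3,2,2,2)$, whose top-four partial sum is $9$, strictly greater than the $8$ of lex's $(3,3,1,1,1)$. So the shifting step in Lemma~\ref{lem:binomial}(iii) simply cannot take you from $(3,2,2,2)$ towards lex while staying realisable, and ``as concentrated as possible'' does not single out lex. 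The conclusion is still true---in this example $\sum_v \binom{a+c_v}{t}$ differs by exactly $\binom{a}{t-3}$ in lex's favour---but the mechanism is not convexity/majorisation; it is the Vandermonde identity $\binom{a+c_v}{t} = \sum_j \binom{a}{t-j}\binom{c_v}{j}$ combined with the observation that $\sum_v \binom{c_v}{j} = \sum_{\cJ \in \binom{\mathcal{E}}{j}} |\cap_{E \in \cJ} E|$ is maximised by lex for every $j \ge 2$ (since $|\cap\cJ| \le k-1$ always, with equality throughout for lex). This is morally the same stratification-by-intersection idea the paper uses, and you would need to build it in explicitly. Finally, you correctly flag the danger that the lower-order corrections (inclusion--exclusion over $|S| \ge 2$, plus Lemma~\ref{lem:nontrivial}) could swamp a small leading-order gap, but you do not resolve it; the paper's conditioning on $\cH$ is precisely what sidesteps this, and ``the same shifting-and-convexity template'' will not obviously suffice here.
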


\begin{proof}[Proof of Proposition \ref{prop:exact}]
If $t = 0$ or $t = 1$, then there is nothing to prove, as $\inter(\cF,0) = 1$ and $\inter(\cF,1) = m$ for all such families $\cF$.  Hence we may assume $t \ge 2$, and thus apply Theorem~\ref{thm:structure}.  It follows that $\cF$ must contain $\ell$ full stars.  Let us write $\cF = \cF_0 \cup \cF_1$, where $\cF_0$ is the union of the $\ell$ full stars, and $\cF_1$ consists of the remaining sets.  Let $m_1 = \card{\cF_1} = m - \binom{n}{k} + \binom{n-\ell}{k}$ denote the number of additional sets $\cF$ contains.  If $m_1 = 0$ then we are done, as all edges are accounted for.  If $m_1 = 1$, then by symmetry it does not matter which set we add outside the $\ell$ stars, and so it again follows that $\cL_{n,k}(m)$ is optimal.  Hence we may assume $m_1 \ge 2$.

We will now show that $\inter(\cF,t)$ is maximised when $\card{ \cap_{F \in \cF_1} F } = k-1$; that is, when the sets in $\cF_1$ have the maximum possible intersection.  In our case, since $m_1 \le n - \ell - k + 1$, the additional sets in $\cL_{n,k}(m)$ all share the elements $\{\ell+1, \ell+2, \hdots, \ell+k-1\}$, and hence it will follow that $\cL_{n,k}(m)$ is optimal.

We count the intersecting subfamilies of $\cF$ based on their intersection with $\cF_1$.  Given some $\cH \subset \cF_1$ with $h$ sets, let $\mathrm{ext}(\cH)$ denote the number of extensions of $\cH$ by sets in $\cF_0$ to an intersecting subfamily of $\cF$ of size $t$.  In other words, it is the number of intersecting subfamilies in $\cF_0$ of size $t-h$ that intersect all sets in $\cH$.  We then have
\[ \inter(\cF,t) = \sum_{h=0}^t \sum_{\cH \in \binom{\cF_1}{h}} \mathrm{ext}(\cH). \]

When $h=0$, we simply obtain the number of intersecting subfamilies of size $t$ in $\cF_0$, which is independent of $\cF_1$.  If $h = 1$, then by symmetry it does not matter which set we choose for $\cH$.  Hence we may assume $h \ge 2$.  Suppose we have $\card{ \cap_{H \in \cH} H } = a$.  The number of sets $F \in \cF_0$ that intersect $\cH$ without containing one of the $a$ common elements is very small.  Indeed, fix any set $H \in \cH$.  Since $F \cap H \neq \emptyset$, there are $k$ options for this intersection $x$.  As we are not selecting one of the $a$ common elements of $\cH$, there must be some other set $H' \in \cH$ not containing $x$.  Hence we must again select an element of $H'$, giving a further $k$ options at the most.  Finally, since $F$ belongs to $\cF_0$, we must choose one of the $\ell$ centres of the stars.  There are then a further $k-3$ elements to choose for $F$.  Thus there are at most $\ell k^2 \binom{n-3}{k-3} < \frac{\ell k^3}{n} \binom{n-2}{k-2}$ such sets $F$.  This will be a lower order term, which we may disregard.  In particular, this implies that we should have $a \ge 1$ for $\cH$ to have a significant number of extensions.

We shall now estimate $\mathrm{ext}(\cH)$.  Calculations similar to those in the proof of Lemma \ref{lem:nontrivial} show that the number of extensions that are not themselves stars is a lower-order term, and hence we need only consider trivially intersecting extensions.  There are three cases to consider.

The centre of the star could be one of the centres of the $\ell$ full stars in $\cF_0$.  There are thus $\ell$ choices for the centre, and then the sets chosen must intersect $\cH$.  In light of our previous remarks, the number of such sets is dominated by those containing one of the $a$ common elements, giving $(a + o(1)) \binom{n-2}{k-2}$ options.  We double-count very few extensions, as then the sets from $\cF_0$ must all contain two of the $\ell$ centres of the stars, giving at most $\binom{\ell}{2} a \binom{n-3}{k-3}$ such sets.  Thus the number of extensions of this type is $(\ell - o(1)) \binom{(a + o(1)) \binom{n-2}{k-2}}{t-h}$.

The second type of trivially intersecting extensions is that where the centre is one of the $a$ common elements of $\cH$.  These sets must then contain any one of the $\ell$ centres of the stars in $\cF_0$, and thus the number of extensions is $(a - o(1)) \binom{(\ell - o(1)) \binom{n-2}{k-2}}{t-h}$.

The final type is that where the centre $x$ of the star is neither one of the $\ell$ centres from $\cF_0$ nor one of the $a$ common elements from $\cF_1$.  These sets must then contain $x$, one of the $\ell$ centres, and some elements from $\cH$, and thus there are very few such sets.

We thus conclude that $\mathrm{ext}(\cH) = (\ell - o(1)) \binom{(a + o(1)) \binom{n-2}{k-2}}{t-h} + (a - o(1)) \binom{(\ell - o(1)) \binom{n-2}{k-2}}{t-h}$.  This is increasing in $a$, and so to maximise $\mathrm{ext}(\cH)$ we must have $a = k-1$.  However, all subfamilies $\cH$ with $a = k-1$ are isomorphic, as they consist of $h$ distinct vertices attached to a common core of $k-1$ vertices.  Hence in this case $\mathrm{ext}(\cH)$ does not depend on which sets we choose, and thus $\mathrm{ext}(\cH)$ is maximised if and only if $a = k-1$.

This completes the proof of Proposition \ref{prop:exact}.

\end{proof}

Note that for a family $\cF$ to be extremal, it should maximise $\mathrm{ext}(\cH)$ for all $\cH \subset \cF_1$.  In particular, provided $t$ is not too large, this implies that $\cF$ is extremal if and only if it contains $\ell$ full stars and, for $2 \le h \le t-1$, any collection of $h$ sets in $\cF$ outside the full stars have $k-1$ vertices in common.  When $t$ is large, we will have $\mathrm{ext}(\cH) = 0$ for all $\cH$, as it will be impossible to find $t$ sets that intersect $\cH$ and meet the centres of the sets $[\ell]$.  Hence in this case $\cF_1$ may be chosen arbitrarily, and $\cF$ is extremal if and only if it contains $\ell$ full stars.

\medskip

Unfortunately, in contrast to the graph case, this gives a rather narrow range of family sizes for which we are able to determine the extremal families exactly.  However, it is necessary to have a somewhat more restricted range, as we shall show in Section \ref{sec:conclusion} that even for $\binom{n-1}{k-1} < m < 2 \binom{n-1}{k-1}$, $\cL_{n,k}(m)$ is not always optimal.

Finally, note that the exact counting result in Proposition \ref{prop:exact} implies that for these ranges of family sizes, $\cL_{n,k}(m)$ is a most probably intersecting family, thus giving Corollary~\ref{cor:hypergraphs}.  The proof is exactly the same as the derivation of Theorem \ref{thm:graphs} from Proposition \ref{prop:graphcount}, and so we do not repeat it here.

To complete this section, we now furnish a proof of Lemma \ref{lem:nontrivial}, bounding the number of non-trivially intersecting families.

\begin{proof}[Proof of Lemma \ref{lem:nontrivial}]

We begin by bounding the total number of non-trivially intersecting families of size $t$ in $\binom{[n]}{k}$.  Given such a family $\cH$, we write $\cH = \cH_0 \cup \cH_1$, where $\cH_0$ is the largest star in $\cH$.  Note that we must have $\cH_1 \neq \emptyset$, as $\cH$ is non-trivially intersecting.  Let $S = \cap_{F \in \cH_0} F$ be the centre of $\cH_0$, and let $\cM \subset \{ F \setminus S : F \in \cH_0 \}$ be the largest matching in the sets of the star after the centre is removed.  We denote the sizes of these sets as follows: $\card{\cH_0} = t_0$, $\card{S} = s$ and $\card{\cM} = b$.  

Let us first provide some bounds on these parameters.  Clearly, $s \le k$, as $S$ is a subset of each set in the star $\cH_0$.  Moreover, we claim $b \le k$ as well.  Indeed, every set $F$ in $\cH_1$ must be disjoint from $S$, as otherwise $\cH_1 \cup \{F\}$ would form a larger star.  However, it must intersect the sets $\{ S \cup M : M \in \cM \} \subset \cH_0$, and thus it must contain one element from each of the $b$ disjoint sets in $\cM$.  Since $\card{F} \le k$, we must have $b \le k$.  An easy lower bound on $t_0$ is $t_0 \ge 2$, since any pair of sets in $\cH$ forms a star.  We in fact claim $t_0 \ge \frac{t}{k}$.  Taking any set $F \in \cH$, note that all the other sets in $\cH$ must intersect $F$.  By the pigeonhole principle, there is some element of $F$ contained in at least a $\frac{1}{k}$-proportion of the other sets, giving a star of size at least $\frac{t}{k}$, as desired.

We now construct the intersecting family $\cH$.  There are $\binom{n}{s}$ choices for the centre $S$.  We then have to select $b$ sets of size $k-s$ for the matching $\cM$.  There are $\binom{n-s}{k-s}$ options for each set, giving $\binom{\binom{n-s}{k-s}}{b}$ possible matchings $\cM$.  By the maximality of $\cM$, each of the remaining sets in $\cH_0$ must meet the $(k-s)b$ elements covered by the matching $\cM$.  Hence there are at most $(k-s)b \binom{n-s-1}{k-s-1}$ choices for each set, providing $\binom{(k-s)b \binom{n-s-1}{k-s-1}}{t_0-b}$ ways to completing $\cH_0$.  As mentioned earlier, each set in $\cH_1$ must avoid $S$ and contain at least one element from each set in $\cM$.  This leaves at most $(k-s)^b \binom{n-s-b}{k-b}$ sets, from which we have to choose $t- t_0$.  Thus the number of non-trivially intersecting families with these parameters is bounded above by
\[ \binom{n}{s} \binom{\binom{n-s}{k-s}}{b} \binom{(k-s)b \binom{n-s-1}{k-s-1}}{t_0 - b} \binom{ (k-s)^b \binom{n-s-b}{k-b}}{t-t_0}. \]

Applying the estimates in part (i) of Lemma \ref{lem:binomial}, this can be further bounded by
\begin{align*}
  &n^s \left[ \left( \frac{k}{n} \right)^{b(s-1)} \binom{ \binom{n-1}{k-1}}{b} \right] \left[ \left( \frac{k^{s+1}b}{n^s} \right)^{t_0 - b} \binom{ \binom{n-1}{k-1}}{t_0 - b} \right] \left[ \left( \frac{k^{2b-1}}{n^{b-1}} \right)^{t - t_0} \binom{ \binom{n-1}{k-1}}{t-t_0} \right] \\
    &= \frac{b^{t_0 - b} k^{2b(t-t_0-1) + (s+2)t_0 - t}}{n^{(b-1)(t-t_0-1) + s(t_0 - 1) - 1}} \binom{\binom{n-1}{k-1}}{b} \binom{\binom{n-1}{k-1}}{t_0-b} \binom{\binom{n-1}{k-1}}{t-t_0}
\end{align*}

We now simplify this expression.  Since $b,s \le k$ and $t_0 \le t$, we can easily bound the numerator above by $k^{4kt}$.  For the denominator, note that $t_0 \le t-1$, as $\cH_1 \neq \emptyset$, $s \ge 1$ and $t_0 - 1 \ge \frac{t}{2k}$, as $1 \le \frac{t_0}{2}$ and $t_0 \ge \frac{t}{k}$, giving a lower bound of $n^{t/2k - 1}$.  Thus the number of non-trivially intersecting families with parameters $s,b$ and $t_0$ is at most $n \left( \frac{k^{4k}}{n^{1/2k}} \right)^{t} \binom{ \binom{n-1}{k-1} }{b} \binom{ \binom{n-1}{k-1} }{t_0 - b} \binom{ \binom{n-1}{k-1} }{t - t_0}$.

For the total number of non-trivially intersecting families, we now sum over all $s,b$ and $t_0$, obtaining a bound of
\begin{align*}
& \sum_{s=1}^k \sum_{b=1}^k \sum_{t_0 = t/k}^{t-1} n \left( \frac{k^{4k}}{n^{1/2k}} \right)^{t} \binom{ \binom{n-1}{k-1} }{b} \binom{ \binom{n-1}{k-1} }{t_0 - b} \binom{ \binom{n-1}{k-1} }{t - t_0} \\
&\le kn \left( \frac{k^{4k}}{n^{1/2k}} \right)^{t} \sum_{0 \le b \le t_0 \le t} \binom{ \binom{n-1}{k-1} }{b} \binom{ \binom{n-1}{k-1} }{t_0 - b} \binom{ \binom{n-1}{k-1} }{t - t_0 } \\
&\le kn \left( \frac{8k^{4k}}{n^{1/2k}} \right)^{t} \binom{ \binom{n-1}{k-1}}{t}.
\end{align*} 
To obtain the last inequality, we interpret the sum of the products of the three binomial coefficients as selecting, with repetition, from a collection of $\binom{n-1}{k-1}$ objects three sets $A, B$ and $C$ whose sizes sum to $t$.  We could instead first select $t$ elements from this collection, and then for each element decide which sets among $A$,$B$ and $C$ the elements should belong to.  As the selection was with repetition, an element could belong to several of the sets, and hence there are $2^3$ choices for each element.

By symmetry, every set in $\binom{[n]}{k}$ is in the same number of non-trivially intersecting families of size $t$.  Hence, averaging over all sets, we find that each set $F \in \cF$ can be in at most
\[ tkn \left( \frac{8k^{4k}}{n^{1/2k}} \right)^t  \binom{ \binom{n-1}{k-1}}{t} / \binom{n}{k} = k^2 \left( \frac{8k^{4k}}{n^{1/2k}} \right)^t \binom{ \binom{n-1}{k-1} - 1}{t - 1} < n^{-t/4k} \binom{ \binom{n-1}{k-1} - 1}{t-1} \]
for sufficiently large $n$.

Summing over the $m$ sets $F \in \cF$, the number of non-trivially intersecting families of size $t$ in $\cF$ is no larger than
\[ mn^{-t/4k} \binom{ \binom{n-1}{k-1}-1}{t-1} / t \le (\ell + 1) n^{-t/4k} \binom{n-1}{k-1} \binom{\binom{n-1}{k-1}-1}{t-1} / t = (\ell + 1) n^{-t/4k} \binom{ \binom{n-1}{k-1}}{t}, \]
thus giving the desired bounds.

\end{proof}

\section{Concluding remarks} \label{sec:conclusion}

In this paper, we have extended the shifting arguments of \cite{dasgansud13} to determine which uniform families of sets are most probably intersecting.  To derive the probabilistic result, we studied the counting version of the problem, finding families with the maximum number of intersecting subfamilies of any given size.

\medskip 

In particular, for graphs we showed that, provided the graphs are not too dense, the initial segment of the lexicographic order $\cL_{n,2}(m)$ maximises the number of intersecting subgraphs with $t$ edges.  This leaves open the question for denser graphs, on which we provide some remarks.

\medskip

In the case $t \ge \frac{n}{2}$, it is easy to show by shifting that $\cL_{n,2}(m)$ is optimal for any $m$.  Indeed, suppose we have a graph with vertices $x,y,z$ of degrees $d_x \le d_y \le d_z < n-1$, and suppose $\{x,y\}$ is an edge of the graph.  The number of stars this edge is contained in is $\binom{d_x-1}{t-1} + \binom{d_y - 1}{t-1}$.  On the other hand, if we were to add an edge containing $z$, it would be contained in at least $\binom{d_z}{t-1}$ stars.  Since $t \ge \frac{n}{2}$, we have $t - 1 \ge \frac{n-2}{2} > \frac{d_x - 1}{2}$, and so
\[ \binom{d_x - 1}{t-1} + \binom{d_y - 1}{t-1} \le \binom{d_x - 1}{t-2} + \binom{d_y - 1}{t-1} \le \binom{d_y - 1}{t-2} + \binom{d_y-1}{t-1} = \binom{d_y}{t-1} \le \binom{d_z}{t-1}. \]

Hence we may always shift edges to the vertex of highest degree until that star is filled.  Repeating the process for the remaining vertices, we obtain a graph isomorphic to $\cL_{n,2}(m)$, and hence $\cL_{n,2}(m)$ maximises $\inter(G,t)$ over all graphs $G$ with $m$ edges.

\medskip

By the theorem of Ahlswede-Katona \cite{ahlkat78}, we know for $m \ge \frac12 \binom{n}{2} + \frac{n}{2}$, the number of intersecting pairs of edges is maximised not by $\cL_{n,2}(m)$, but by its complement, $\cC_{n,2}(m)$.  Hence for such $m$ we cannot hope to have one graph $G$ that simultaneously maximises the number of intersecting subgraphs of all given orders.  Referring to Equation \eqref{eqn:probtocount}, it follows that in this regime the most probably intersecting graph depends on the probability $p$.  For very small values of $p$, $\cC_{n,2}(m)$ is optimal, while for very large values of $p$, $\cL_{n,2}(m)$ is better.

\medskip

However, the convexity of the binomial coefficients (see, for instance, Lemma \ref{lem:binomial}), suggests that if $\cL_{n,2}(m)$ maximises $\inter(G,t)$, then it should maximise $\inter(G,t')$ for all $t' \ge t$.  In particular, we believe that the result in Theorem \ref{thm:graphs} should extend to $m \le \frac12 \binom{n}{2} - \frac{n}{2}$.

\medskip

In the case of hypergraphs, the situation is even more intricate.  We showed that when $\binom{n}{k} - \binom{n-\ell}{k} \le m \le \binom{n}{k} - \binom{n-\ell}{k} + n - \ell - k + 1$, $\cL_{n,k}(m)$ maximises $\inter(\cF,t)$.  Thus we are able to determine the extremal families for the counting problem for a number of isolated ranges of family sizes.  One might hope that, as in the graph case, $\cL_{n,k}(m)$ remains optimal between these ranges as well.  However, we show now that this is not the case.

\medskip

Suppose, for simplicity, that we are counting the number of intersecting subfamilies of size three in $3$-uniform hypergraphs, whose number of edges is between one and two full stars.  Then $m = \binom{n-1}{2} + m'$, where $0 \le m' \le \binom{n-2}{2}$.  Provided we do not have two almost-full stars, Theorem~$\ref{thm:structure}$ shows that any extremal family is of the form $\cF = \cF_0 \cup \cF_1$, where $\cF_0$ is a full star, and $\cF_1$ consists of the remaining $m'$ sets.

There are four types of intersecting subfamilies of three sets: those with $0$, $1$, $2$ and $3$ sets from $\cF_1$ respectively.  To maximise the number of subfamilies with $3$ sets from $\cF_1$, it suffices to take $\cF_1$ to be intersecting.  The number of subfamilies with $0$ and $1$ sets from $\cF_1$ is independent of the structure of $\cF_1$.  Finally, to maximise the number of subfamilies with two sets from $\cF_1$, it follows from the calculations in Proposition \ref{prop:exact} that we should seek to maximise the number of pairs of sets in $\cF_1$ that intersect in two elements.

Note that in $\cL_{n,3}(m)$, the sets in $\cF_1$ all share a common element.  If we remove this common element, $\cF_1$ will be the lexicographic graph with $m'$ edges.  Since we have removed a common element from each set, we are trying to maximise the number of pairs of intersecting edges.  By the result in \cite{ahlkat78}, if $m' > \frac12 \binom{n-2}{2} + \frac{n-2}{2}$, this maximum is attained by the colexicographic graph instead, and hence it follows that $\cL_{n,3}(m)$ does not maximise $\inter(\cF,t)$.

\medskip

This phenomenon holds in general, and shows that determining the exact optimal $k$-uniform families for all $\binom{n}{k} - \binom{n-\ell+1}{k} \le m \le \binom{n}{k} - \binom{n - \ell}{k}$ may require a complete solution to the counting problem for the number of $t$-intersecting subfamilies of a $(k-1)$-uniform set family.  Indeed, it further suggests that even in this initial range, there may not be one set family that simultaneously maximises the number of intersecting subfamilies of any given size, and thus the optimal families may depend on the probability $p$.

\medskip

Finally, as with the results in \cite{katkatkat12}, \cite{russell12} and \cite{russwalt13}, the extremal families we obtain here are simultaneously optimal for the counting problems as well, and thus we use Equation \eqref{eqn:probtocount} to resolve the probabilistic problem.  It would be very interesting to develop techniques to attack the probabilistic problem directly, as one might then find a complete solution even in the regime where the optimal family depends on the underlying probability $p$.

\end{document}